\documentclass[11pt,a4paper]{article}
\usepackage[utf8]{inputenc}
\usepackage[english]{babel}
\usepackage{xcolor}

\usepackage{amssymb,mathtools,amsmath,amsthm,stmaryrd}
\usepackage[all,cmtip,2cell]{xy} 

\usepackage{breakurl}
\usepackage[draft=false, hidelinks, linkcolor = blue, breaklinks]{hyperref}
\usepackage{cleveref}
\usepackage{tensor}
\usepackage{url}
\usepackage{bookmark}

\usepackage{graphicx}
\usepackage{lmodern}

\usepackage[left=2cm,right=2cm,top=2.5cm,bottom=2.5cm]{geometry}

\def\defthm#1#2#3#4{
  \newtheorem{#1}[theorem]{#3}
  \newtheorem*{#1*}{#3}
  \newtheorem{#2}[theorem]{#4}
  \newtheorem*{#2*}{#4}
  \crefname{#1}{#3}{#4}
  \crefname{#2}{#4}{#4}  
}


\newtheoremstyle{mythm}%
{10pt}
{}
{\itshape}
{}
{\bf}
{.}
{.5em}
{}%


\newtheoremstyle{mydef}%
{10pt}
{3pt}
{}
{}
{\bf}
{.}
{.5em}
{}%


\newtheoremstyle{myrmk}%
{10pt}
{3pt}
{}
{}
{\bf}
{.}
{.5em}
{}%

\theoremstyle{mythm}
\newtheorem{theorem}{Theorem}[section]
\newtheorem*{theorem*}{Theorem}

\defthm{corollary}{corollaries}{Corollary}{Corollaries}
\defthm{lemma}{lemmata}{Lemma}{Lemmata}
\defthm{proposition}{propositions}{Proposition}{Propositions}
\defthm{axiom}{axioms}{Axiom}{Axioms}
\defthm{propdef}{props/defs}{Proposition/Definition}{Prositions/Definitions}
\defthm{defcor}{defscors}{Corollary/Definition}{Corollaries/Definitions}
\defthm{conjecture}{conjectures}{Conjecture}{Conjectures}

\theoremstyle{mydef}
\defthm{definition}{definitions}{Definition}{Definitions}

\theoremstyle{myrmk}
\defthm{acknowledgment}{acknowledgments}{Acknowledgment}{Acknowledgments}
\defthm{remark}{remarks}{Remark}{Remarks}
\defthm{motivation}{motivations}{Motivation}{Motivations}
\defthm{example}{examples}{Example}{Examples}
\defthm{question}{questions}{Question}{Questions}
\defthm{observation}{observations}{Observation}{Observations}
\defthm{claim}{claims}{Claim}{Claims}
\defthm{notation}{notations}{Notation}{Notations}

\newtheorem*{replemmax}{\reptitle}
 {\end{replemmax}}

\newtheorem*{repthmx}{\reptitle}
 {\end{repthmx}}

\newtheorem*{repcorx}{\reptitle}
 {\end{repcorx}}


\crefname{section}{Section}{Sections}
\crefname{theorem}{Theorem}{Theorems}


\makeatletter
\renewenvironment{proof}[1][\proofname] {\par\pushQED{\qed}\normalfont\topsep6\p@\@plus6\p@\relax\trivlist\item[\hskip\labelsep\bf#1\@addpunct{.}]\ignorespaces}{\popQED\endtrivlist\@endpefalse}
\makeatother


\newcommand{\blank}{\mbox{\hspace{3pt}\underline{\ \ }\hspace{2pt}}}
\newcommand{\sprime}{^{\prime}}

\newcommand{\pbs}{\scalebox{1.5}{\rlap{$\cdot$}$\lrcorner$}}

\newcommand{\Address}{{
  \bigskip
  \footnotesize

\textsc{Department of Mathematics and Statistics, Masaryk University, Kotl\'{a}\v{r}sk\'{a} 2, Brno 61137, Czech Republic}\par\nopagebreak
  \textit{E-mail address}: \texttt{stenzelr@math.muni.cz}

}}

\author{Raffael Stenzel}

\title{On notions of compactness, object classifiers and weak Tarski universes}

\begin{document}
\maketitle
\abstract{
We prove a correspondence between $\kappa$-small fibrations in simplicial presheaf categories equipped 
with the injective or projective model structure (and left Bousfield localizations thereof) and 
relatively $\kappa$-compact maps in their underlying quasi-categories for suitably large regular cardinals $\kappa$. 
We thus obtain a transition result between weakly universal small fibrations in the (type theoretic) injective Dugger-Rezk-style 
standard presentations of model toposes and object classifiers in Grothendieck $\infty$-toposes in the sense of Lurie.
}

\section{Introduction}
A Grothendieck $\infty$-topos $\mathcal{M}$ is an accessible left exact localization of the presheaf $(\infty,1)$-category
$\hat{\mathcal{C}}$ over some small $(\infty,1)$-category $\mathcal{C}$ (\cite[Section 6.1]{luriehtt}). Hence, it is presented by a 
model topos $\mathbb{M}$ given by a left exact left Bousfield localization of the simplicial category $\mathbf{sPsh}(\mathbf{C})$ of 
simplicially enriched presheaves on an associated small simplicial category $\mathbf{C}$, equipped with 
either the projective or the injective model structure (\cite[Section 6]{rezkhtytps}). In the following we prove a 
correspondence between $\kappa$-small fibrations in such model toposes $\mathbb{M}$, and relatively $\kappa$-compact maps in their 
associated Grothendieck $\infty$-toposes $\mathcal{M}=\mathrm{Ho}_{\infty}(\mathbb{M})$ for regular cardinals $\kappa$ large enough. This is 
motivated by the interpretation of univalent Tarski universes defined in Martin-L\"{o}f type theory (\cite{hott}) as univalent fibrations 
universal for the class of $\kappa$-small fibrations for suitable cardinals $\kappa$ (\cite{cisinski}, \cite{shulmanuniverses}), and their 
intended interpretation as object classifiers in higher topos theory for relatively $\kappa$-compact maps as developed in
\cite[Section 6.1.6]{luriehtt}. Therefore, even though we prove an analogous (but slightly weaker result) result for the projective model 
structure and arbitrary localizations, the main result of this paper is the following.

\setcounter{section}{3}
\setcounter{theorem}{20}
\begin{theorem}
Let $\mathbf{C}$ be a small simplicial category, $T$ be a set of arrows in the underlying category of $\mathbf{sPsh}(\mathbf{C})$ and
$\mathbb{M}$ be the left Bousfield localization $\mathcal{L}_T\mathbf{sPsh}(\mathbf{C})_{\mathrm{inj}}$ of
$\mathbf{sPsh}(\mathbf{C})$ equipped with the injective model structure at the set $T$ (in the classic sense of
\cite[Section 3.3]{hirschhorn03}). Assume that the localization is left exact, and let $\kappa$ be a sufficiently large 
regular cardinal. Then a morphism $f\in\mathrm{Ho}_{\infty}(\mathbb{M})$ is relatively $\kappa$-compact if and only if there is a
$\kappa$-small fibration $g\in\mathrm{sPsh}(\mathbf{C})$ such that $g\simeq f$ in $\mathrm{Ho}_{\infty}(\mathbb{M})$.
\end{theorem}

\setcounter{section}{4}
\setcounter{theorem}{0}

\begin{corollary}
Let $\mathbb{M}=\mathcal{L}_T\mathbf{sPsh}(\mathbf{C})_{\mathrm{inj}}$ be a left exact left Bousfield localization as in 
Theorem~\ref{thmcmptinj}, and let $\kappa$ be a sufficiently large regular cardinal. Then a relatively $\kappa$-compact map
$p\in\mathrm{Ho}_{\infty}(\mathbb{M})$ is a 
classifying map for all relatively $\kappa$-compact maps in $\mathrm{Ho}_{\infty}(\mathbb{M})$ if and only if there is a univalent
$\kappa$-small fibration $\pi\in\mathbb{M}$ which is weakly universal for all $\kappa$-small fibrations in $\mathbb{M}$ such that 
$p\simeq\pi$ in $\mathrm{Ho}_{\infty}(\mathbb{M})$. 
\end{corollary}

As a prerequisite for the proof we are giving, in Section~\ref{sectransferskeleton} we show that up to DK-equivalence 
every simplicial category can be replaced by the localization of a well founded poset as already observed by Shulman in 
\cite{shulmannote}. This allows us to replace simplicial presheaf categories over arbitrary small simplicial categories
$\mathbf{C}$ by localizations of simplicial presheaf categories over well founded posets $I$. In Section~\ref{secsmallvscompact}, we 
will use that such model categories come equipped with a theory of minimal fibrations that will allow us to present relatively
$\kappa$-compact maps in their underlying quasi-category by $\kappa$-small fibrations. Those can be pushed forward to
$\kappa$-small projective fibrations in our original presheaf category over $\mathbf{C}$ making use of Dugger's ideas about universal 
homotopy theories in \cite{duggerunivhtytheories}. The move to the injective model structure then follows by Shulman's 
recent observation (\cite[Section 8]{shulmanuniverses}) that the cobar construction on simplicial presheaf categories takes projective 
fibrations to injective ones.
In Section~\ref{secuniverses} we explain the relevance of this result for the semantics of Homotopy Type Theory in 
higher topos theory, as Theorem~\ref{thmcmptinj} is necessary to translate Tarski universes in the syntax to object 
classifiers in an $\infty$-topos (when using the common semantics via type theoretic model categories given in
\cite[Section 4]{shulmaninv}).
	
\paragraph{Acknowledgments}
The author would like to thank his advisor Nicola Gambino at the University of Leeds for the frequent discussions 
and references, as well as his feedback on all stages of the writing process. The author is 
also grateful to Karol Szumi\l{}o and Mike Shulman for helpful discussions and, especially, to Mike Shulman for sharing 
his note on the presentation of small $(\infty,1)$-categories as localizations of inverse posets, which provided a 
crucial step to the proof of Theorem~\ref{thmcmptinj}. Also thanks to Jarl Taxer\aa s Flaten who realized that the assumption of a 
Mahlo cardinal in an earlier version of this paper is unnecessary (see Remark~\ref{remlargecardinals}).

Most of the work for this paper was carried out as part of the author's PhD thesis, supported by a Faculty Award of the University of 
Leeds 110 Anniversary Research Scholarship. It was finished with support of the Grant Agency of the Czech Republic under the grant 
22-02964S.
\setcounter{section}{1}
\setcounter{theorem}{0}

\section{Direct poset presentations of simplicial categories}\label{sectransferskeleton}

In the following, simplicial categories - that is simplicially enriched categories - will be denoted by bold faced 
letters $\mathbf{C}$ and ordinary categories will be distinguished by blackboard letters $\mathbb{C}$.
$\mathbf{S}$ denotes the (simplicial) category of simplicial sets. By a simplicial presheaf over $\mathbf{C}$ we mean a 
simplicially enriched presheaf $X\colon\mathbf{C}^{op}\rightarrow\mathbf{S}$. Simplicial presheaves and simplicial natural 
transformations form part of a simplicial category $\mathbf{sPsh}(\mathbf{C})$ (via the usual end-construction, denoted
$[\mathbf{C}^{op},\mathbf{S}]$ in \cite[Section 2.2]{kellybook}) whose underlying ordinary category will be denoted by
$\mathrm{sPsh(\mathbf{C})}$.

Mike Shulman noted in \cite[Lemma 0.2]{shulmannote} that every quasi-category can be presented by the 
localization of a direct -- in other words, well-founded -- poset.\footnote{Shulman in fact argues for a presentation by 
inverse posets. But since 
localization commutes with taking opposite categories, this amounts to the same statement.}
Since the note is unpublished, in this section we present a slightly stronger variation of his observation (with an accordingly 
slightly different proof) and discuss the resulting presentations of associated presheaf $(\infty,1)$-categories.
Although the following sections only will require the fact that every $(\infty,1)$-category can be 
presented by the localization of an Eilenberg-Zilber category (\cite{bmezcat}), proving the stronger condition of 
posetality only requires about as much work as the Eilenberg-Zilber condition itself.\\

Recall the following constructions and notation from \cite{bkrelcat}. A \emph{relative category} is a pair
$(\mathbb{C},V)$ such that $\mathbb{C}$ is a category and
$V$ is a subcategory of $\mathbb{C}$. A \emph{relative functor}
$F\colon(\mathbb{C},V)\rightarrow(\mathbb{D},W)$ is a functor
$F\colon\mathbb{C}\rightarrow\mathbb{D}$ of categories such that $F[V]\subseteq W$. The relative functor $F$ is a 
\emph{relative inclusion} if its underlying functor of categories is an inclusion and $V=W\cap\mathbb{C}$. The category 
of small relative categories and relative functors is denoted by RelCat.

There are two canonical inclusions of the category $\mathrm{Cat}$ of small categories into RelCat; for a category
$\mathbb{C}$ and its discrete wide subcategory $\mathbb{C}_0$, we obtain the associated minimal relative category
$\check{\mathbb{C}}:=(\mathbb{C},\mathbb{C}_0)$ and the associated maximal relative category
$\hat{\mathbb{C}}:=(\mathbb{C},\mathbb{C})$. 

In \cite[Section 4]{bkrelcat}, Barwick and Kan introduce a combinatorial sub-division operation
$\xi\colon\mathrm{RelPos}\rightarrow\mathrm{RelPos}$ on relative posets (considered as posetal relative categories) and an associated 
bisimplicial nerve construction $N_{\xi}\colon\mathrm{RelCat}\rightarrow s\mathbf{S}$ giving rise to the adjoint pair
\begin{align}\label{equmodeltransf}
\xymatrix{
s\mathbf{S}\ar@<.5ex>[r]^(.4){K_{\xi}} & \mathrm{RelCat}.\ar@<.5ex>[l]^(.6){N_{\xi}}
}
\end{align}
The left adjoint $K_{\xi}$ is given by $K_{\xi}(\Delta[m,n])=\xi(\check{[m]}\times\hat{[n]})$ on representables and left Kan 
extension along the Yoneda embedding. The authors of \cite{bkrelcat} have shown that the category $\mathrm{RelCat}$ 
inherits a transferred model structure from the Reedy model structure
$(s\mathbf{S},R_v)$ which turns the pair $(K_{\xi},N_{\xi})$ into a Quillen-equivalence. By construction, the set 
$K_{\xi}[\mathcal{I}_v]$ forms a set of generating cofibrations for the model structure in question, where
\[\mathcal{I}_v:=\{\partial\Delta[n,m]\hookrightarrow\Delta[n,m]\mid n,m\geq 0\}\]
denotes the generating set of monomorphisms in $s\mathbf{S}$.
Left Bousfield localization of both sides of (\ref{equmodeltransf}) induces a model structure $(\mathrm{RelCat},\mathrm{BK})$ such 
that $(K_{\xi},N_{\xi})$ is a Quillen-equivalence to Rezk's model structure $(s\mathbf{S},\mathrm{CS})$ for complete Segal spaces.

It follows that the underlying quasi-category of the model category $\mathrm{RelCat}$ is equivalent to the
quasi-category of small $(\infty,1)$-categories (\cite[Definition 1.3.4.15]{lurieha}). We will denote the thus associated small 
$(\infty,1)$-category to a relative category $(\mathbb{C},V)$ by $\mathrm{Ho}_{\infty}(\mathbb{C},V)$ whenever it is not necessary to 
specify a specific model of $(\infty,1)$-category theory.

A central notion of \cite{bkrelcat} is that of a ``Dwyer map'' in RelCat. A relative functor
$F\colon(\mathbb{C},V)\rightarrow(\mathbb{D},W)$ is a \emph{Dwyer inclusion} if $F$ is a relative 
inclusion such that $\mathbb{C}$ is a sieve in
$\mathbb{D}$ and such that the cosieve $Z\mathbb{C}$ generated by $\mathbb{C}$ in $\mathbb{D}$ comes equipped with a 
strong deformation retraction $Z\mathbb{C}\rightarrow\mathbb{C}$. The relative functor $F$ is a \emph{Dwyer map} if it 
factors as an isomorphism followed by a Dwyer inclusion, see \cite[Section 3.5]{bkrelcat} for more details.

A major insight of the authors was that the generating cofibrations
\begin{align}\label{bkgencof}
K_{\xi}(\partial\Delta[n,m])\hookrightarrow K_{\xi}(\Delta[n,m])
\end{align}
of the model category $(\mathrm{RelCat},\mathrm{BK})$ are Dwyer maps of (finite) relative posets (\cite[Proposition 9.5]{bkrelcat}). 
It follows that every cofibration in $(\mathrm{RelCat},\mathrm{BK})$ is a Dwyer map and that every cofibrant object is a relative 
poset (\cite[Theorem 6.1]{bkrelcat}).

\begin{proposition}\label{lemmacofobjrelcat}
The underlying category of a cofibrant object in $(\mathrm{RelCat},\mathrm{BK})$ is a direct (i.e.\ well-founded) poset.
\end{proposition}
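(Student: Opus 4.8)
The plan is to exploit that $(\mathrm{RelCat},\mathrm{BK})$ is cofibrantly generated by $K_\xi[\mathcal{I}_v]$, so that every cofibrant object $X$ is a retract of an object $\bar{X}$ obtained from the initial relative category $\emptyset$ by a (possibly transfinite) composition of pushouts of maps in $K_\xi[\mathcal{I}_v]$. Let $\mathcal{D}$ denote the class of relative categories whose underlying category is a direct (equivalently, well-founded) poset. Since $\emptyset\in\mathcal{D}$ trivially, it suffices to show that $\mathcal{D}$ is closed under (i) retracts in $\mathrm{RelCat}$, (ii) the transfinite compositions occurring in a cell complex, and (iii) pushouts of the generating cofibrations $K_\xi(\delta_m\Box\sprime\delta_n)$ along arbitrary relative functors with domain in $\mathcal{D}$. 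The input about the generators is \cite[Proposition 9.5]{bkrelcat}: each $K_\xi(\delta_m\Box\sprime\delta_n)$ is a Dwyer inclusion of relative posets. These posets are moreover \emph{direct}: both the domain and the codomain of $K_\xi(\delta_m\Box\sprime\delta_n)$ are finite --- the codomain is $K_\xi(\Delta^m\Box\Delta^n)=\xi(\check{[m]}\times\hat{[n]})$, the Barwick--Kan subdivision of a finite relative poset, and the domain is a finite colimit of such subdivisions --- and a finite poset is automatically well-founded. Thus in (ii) and (iii) all maps involved are Dwyer inclusions, and I will use repeatedly that a Dwyer inclusion is in particular a full sieve inclusion of the underlying categories.

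Properties (i) and (ii) are the easy ones. For (i), work on underlying categories: if the underlying category of $X$ is a retract of a poset $P$ via $i\colon X\to P$ and $r\colon P\to X$ with $ri=\mathrm{id}_X$, then parallel arrows $f,g$ of $X$ satisfy $if=ig$ (as $P$ is a poset), whence $f=rif=rig=g$, so $X$ is a poset; and any infinite strictly descending chain in $X$ is carried by $i$ to an infinite strictly descending chain in $P$ (if $i$ collapsed two consecutive terms, $r$ would identify them too), contradicting well-foundedness of $P$. For (ii), let $X=\operatorname{colim}_{\beta<\lambda}X_\beta$ be a transfinite composite of Dwyer inclusions with each $X_\beta\in\mathcal{D}$. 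Each $X_\beta$ is a full sieve in $X$, so every hom-set of $X$ is already computed at some stage and hence has at most one element; and an infinite strictly descending chain in $X$ would have its top term in some $X_\beta$, forcing, by the sieve property, all smaller terms into $X_\beta$ as well, contradicting well-foundedness of $X_\beta$.

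The substance, and the step I expect to be the obstacle, is (iii): given a pushout square in $\mathrm{RelCat}$ with corner $A$, a generating cofibration $A\hookrightarrow B$ (a Dwyer inclusion of finite direct posets), and an arbitrary relative functor $A\to X$ with $X\in\mathcal{D}$, one must see that $Y:=X\sqcup_A B\in\mathcal{D}$. Here I would invoke Barwick--Kan's analysis of pushouts of Dwyer maps in \cite[Section 3.5]{bkrelcat}: $X\hookrightarrow Y$ is again a Dwyer inclusion, the underlying category of $Y$ has object set $\mathrm{Ob}(X)\sqcup(\mathrm{Ob}(B)\setminus\mathrm{Ob}(A))$, the arrows between objects of $X$ coincide with those of $X$ and the arrows out of an object of $\mathrm{Ob}(B)\setminus\mathrm{Ob}(A)$ coincide with those of $B$ (both because $A$ is a sieve in $B$, so no arrow runs from a new object back into $A$), and the only new arrows --- those from an object of $X$ to a new object $b\in ZA\setminus A$ --- are generated by composing through $A$ and are pinned down, in number and hence uniquely, by the strong deformation retraction $ZA\to A$ that forms part of the Dwyer structure. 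The delicate point is exactly this last bookkeeping of hom-sets across the gluing locus: it is the deformation-retraction datum, and not merely the sieve condition, that forces candidate parallel arrows $x\to a_1\to b$ and $x\to a_2\to b$ to be identified in $Y$, so that $Y$ remains a poset. Granting that description, well-foundedness of $Y$ follows by running the argument of the previous paragraph once more: an infinite strictly descending chain in $Y$ is eventually trapped inside $B$ (if it stays among the new objects it is a chain in the well-founded $B$) or inside $X$ (once it reaches an object of $X$ it can never leave, as no arrow goes from a new object to an object of $X$), contradicting well-foundedness of $B$ or of $X$. Accordingly I would spend most of the care verifying the hom-set description in (iii), leaning on \cite[Section 3.5 and Proposition 9.5]{bkrelcat}; the remaining closure arguments are as above.
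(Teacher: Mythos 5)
Your proof is correct and follows essentially the same route as the paper: induction along the small object argument, using that the generating cofibrations are Dwyer inclusions of finite relative posets, that relative posets are closed under the relevant pushouts and transfinite compositions, and that the sieve property of Dwyer inclusions traps descending chains in an earlier (well-founded) stage, finishing with closure under retracts. The only difference is one of economy: the posetality of pushouts that you propose to verify by hand in step (iii) is exactly \cite[Proposition 9.2]{bkrelcat}, which the paper simply cites, leaving only the (easy) preservation of well-foundedness to check.
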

\begin{proof}
Since the empty relative category $\emptyset$ is a relative direct poset, it suffices to show that for every cofibration
$(\mathbb{P},V)\hookrightarrow(\mathbb{Q},W)$ where $(\mathbb{P},V)$ is a relative direct poset also $(\mathbb{Q},W)$ is 
a relative direct poset. We show this by induction along the small object argument as follows.

The generating cofibrations (\ref{bkgencof}) are maps between finite relative posets and such are clearly direct. Both Dwyer maps and 
relative posets are closed under coproducts and under pushouts between relative posets by
\cite[Proposition 9.2]{bkrelcat}. Since Dwyer inclusions are inclusions of sieves, it is easy to see that both constructions 
preserve well-foundedness, too. Suppose we are given a 
transfinite composition of Dwyer maps $A_{\alpha}\rightarrow A_{\beta}$ for $\alpha<\beta\leq\lambda$ ordinals 
and $A_{\alpha}$ relative inverse posets. Then, as stated in the proof of \cite[Proposition 9.6]{bkrelcat}, the colimit $A_{\lambda}$ 
is a relative poset. Suppose $a=(a_i\mid i<\omega)$ is a descending sequence of arrows in $A_{\lambda}$ and let
$\alpha<\lambda$ such that $a_0\in A_{\alpha}$. Then the whole sequence $a$ is contained in $A_{\alpha}$, 
because the inclusion $A_{\alpha}\hookrightarrow A_{\lambda}$ is a Dwyer map by \cite[Proposition 9.3]{bkrelcat} 
and so $A_{\alpha}\subseteq A_{\lambda}$ is a sieve. Therefore, the sequence $a$ is finite.

In particular, every free cofibration $\emptyset\hookrightarrow(\mathbb{P},V)$ -- that is every transfinite composition 
of pushouts of coproducts of generating cofibrations with domain $\emptyset$ -- yields a relative direct poset $(\mathbb{P},V)$. But 
every cofibration $\emptyset\hookrightarrow(\mathbb{Q},W)$ is a retract of such, and hence every cofibrant object in 
RelCat is a relative direct poset.
\end{proof}

\begin{remark}
The same proof shows that the cofibrant objects in the Thomason model structure on $\mathrm{Cat}$ are direct posets, 
using Thomason's original observation that the cofibrant objects in the Thomason model structure are posetal in the 
first place.
\end{remark}

Let $F_{\Delta}\colon\mathrm{Cat}\rightarrow\mathbf{S}\text{-Cat}$
be the Bar construction obtained in the standard way by the monad resolution associated to the free 
category functor $F$ from the category $\mathrm{RGraph}$ of reflexive graphs to $\mathrm{Cat}$ (\cite[Section 2.5]{dksimploc}). Let
$U\colon\mathrm{Cat}\rightarrow\mathrm{RGraph}$ be the
corresponding right adjoint forgetful functor. Recall that $F_{\Delta}$ itself is not the left adjoint to the ``underlying category'' 
functor $(\cdot)_0\colon\mathbf{S}\text{-Cat}\rightarrow\mathrm{Cat}$, but instead, as often remarked in the literature, a cofibrant 
replacement thereof. Furthermore, recall from \cite[Section 4]{dksimploc} the (standard) simplicial localization functor
\[\mathcal{L}_{\Delta}\colon\mathrm{RelCat}\rightarrow\mathbf{S}\text{-Cat}\]
which takes a relative category $(\mathbb{C},V)$ to the simplicial category
given in degree $n\geq 0$ by
\[\mathcal{L}_{\Delta}(\mathbb{C},V)_n=F_{\Delta}(\mathbb{C})_n[F_{\Delta}(V)_n^{-1}].\]
The functor $\mathcal{L}_{\Delta}\colon(\mathrm{RelCat},\mathrm{BK})\rightarrow\mathbf{S}\text{-Cat}$ is part of an equivalence of 
associated homotopy theories (\cite{bksimploc}), so that $\mathrm{Ho}_{\infty}(C,V)\simeq\mathcal{L}_{\Delta}(C,V)$. Indeed, it is 
pointwise equivalent to the \emph{hammock localization} of a relative category (\cite[Section 2]{dkcalcsimplocs}), and hence presents the
$(\infty,1)$-categorical localization of $\mathbb{C}$ at $V$ (\cite[Remark 3.2, Theorem 3.8]{mazelgeernerve}).
Yet it has the benefit of being a strict enriched construction: it is the localization of the simplicial category
$F_{\Delta}(\mathbb{C})$ at the subcategory $F_{\Delta}(V)_0\subseteq F_{\Delta}(\mathbb{C})_0$ in the following sense.

\begin{lemma}\label{lemmasloc1}
For every relative category $(\mathbb{C},V)$ and every (potentially large) simplicial category $\mathbf{D}$, the canonical simplicial functor $j\colon F_{\Delta}(\mathbb{C})\rightarrow\mathcal{L}_{\Delta}(\mathbb{C},V)$ induces an isomorphism
\begin{align}\label{equjloc}
j^{\ast}\colon\mathbf{S}\text{-}\mathrm{Cat}(\mathcal{L}_{\Delta}(\mathbb{C},V),\mathbf{D})\rightarrow\mathbf{S}\text{-}\mathrm{Cat}_{F_{\Delta}(V)}(F_{\Delta}(\mathbb{C}),\mathbf{D})
\end{align}
of hom-categories. 
\end{lemma}
Here we consider $\mathbf{S}\text{-Cat}$ as a 2-category given by $\mathbf{S}$-enriched categories, $\mathbf{S}$-enriched functors and 
$\mathbf{S}$-enriched natural transformations (\cite[Section 1.2]{kellybook}). The right hand side of (\ref{equjloc}) denotes the full 
subcategory of $\mathbf{S}\text{-Cat}(F_{\Delta}(\mathbb{C}),\mathbf{D})$ spanned by those functors which take the arrows of 
$F_{\Delta}(V)_0$ to isomorphisms in $\mathbf{D}_0$. Thus, Lemma~\ref{lemmasloc1} states that
$F_{\Delta}(V)_0\subseteq F_{\Delta}(\mathbb{C})_0$ is ``$\mathbf{S}$-well localizable'' in the sense of \cite{wolffvloc}.

Lemma~\ref{lemmasloc1} and Corollary~\ref{propffres} are not strictly necessary for the results of this paper, but may be beneficial 
to motivate the simplicial localization construction.

\begin{proof}[Proof of Lemma~\ref{lemmasloc1}]
First, each $F_{\Delta}(V)_n$ is generated as a category 
by the iterated degeneracies of $F_{\Delta}(V)_0$. Thus, a simplicial functor $F_{\Delta}(\mathbb{C})\rightarrow\mathbf{D}$ takes all 
arrows in $F_{\Delta}(V)_n$ to isomorphisms in $\mathbf{D}_n$ for all $n\geq 0$ if and only if $F_0$ takes the arrows in
$F_{\Delta}(V)_0$ to isomorphisms in $\mathbf{D}_0$. Using the universal property of the ordinary categorical localization
$F_{\Delta}(\mathbb{C})_n[F_{\Delta}(V)_n^{-1}]$ at each degree $n\geq 0$, it follows that (\ref{equjloc}) is bijective on objects. 

Second, recall that $\mathbf{S}\text{-Cat}$ is cartesian closed (\cite[2.3]{kellybook}). Thus, Cordier's simplicial nerve 
construction $N_{\Delta}\colon\mathbf{S}\text{-Cat}\rightarrow\mathbf{S}$ with left adjoint $\mathfrak{C}$
(\cite[Section 1.1.5]{luriehtt}) 
yields a simplicial enrichment of $\mathbf{S}\text{-Cat}$ itself. Thus, the fact that $j^{\ast}$ is fully faithful follows from the 
same objectwise argument but applied to the simplicial category $[\mathfrak{C}(\Delta^1),\mathbf{D}]$. 
Indeed, enriched natural transformations of simplicial functors $\mathcal{L}_{\Delta}(\mathbb{C},V)\rightarrow\mathbf{D}$ stand in 1-1 
correspondence to 1-simplices in the nerve $N_{\Delta}([\mathcal{L}_{\Delta}(\mathbb{C},V),\mathbf{D}])$. Such correspond bijectively 
to simplicial functors $\mathfrak{C}(\Delta^1)\rightarrow[\mathcal{L}_{\Delta}(\mathbb{C},V),\mathbf{D}]$, where
$\mathfrak{C}(\Delta^1)$ is the simplicial category with two objects $0,1$, and
$\mathfrak{C}(\Delta^1)(0,1)\cong\mathfrak{C}(\Delta^1)(0,0)\cong\mathfrak{C}(\Delta^1)(1,1)\cong\Delta^0$ and $\mathfrak{C}(\Delta^1)(1,0)\cong\emptyset$. The latter functors in turn stand in 1-1 correspondence to simplicial functors
$\mathcal{L}_{\Delta}(\mathbb{C},V)\rightarrow[\mathfrak{C}(\Delta^1),\mathbf{D}]$ by \cite[Section 2.3]{kellybook}. Such functors stand in 1-1 correspondence via restriction along $j$ to simplicial functors of type
$F_{\Delta}(\mathbb{C})\rightarrow[\mathfrak{C}(\Delta^1),\mathbf{D}]$ which take arrows in $F_{\Delta}(V)_0$ to isomorphisms in
$[\mathfrak{C}(\Delta^1),\mathbf{D}]_0$ by the first part. These are exactly the enriched 
natural transformations in $\mathbf{S}\text{-Cat}_{F_{\Delta}(V)}(F_{\Delta}(\mathbb{C}),\mathbf{D})$.
\end{proof}

\begin{corollary}\label{propffres}
For $(\mathbb{C},V)\in\mathrm{RelCat}$ and
$j\colon F_{\Delta}(\mathbb{C})\rightarrow\mathcal{L}_{\Delta}(\mathbb{C},V)$ the associated localization 
functor of simplicial categories, the induced restriction
\begin{align}\label{equpropffres}
j^{\ast}\colon \mathbf{sPsh}(\mathcal{L}_{\Delta}(\mathbb{C},V))\rightarrow\mathbf{sPsh}(F_{\Delta}(\mathbb{C}))
\end{align}
of simplicial presheaf categories is fully faithful.
\end{corollary}

\begin{proof}
Given simplicial presheaves $X,Y\in\mathbf{sPsh}(\mathcal{L}_{\Delta}(\mathbb{C},V))$, we want to show that the induced map
\[j^{\ast}(X,Y)\colon\mathbf{sPsh}(\mathcal{L}_{\Delta}(\mathbb{C},V))(X,Y)\rightarrow\mathbf{sPsh}(F_{\Delta}(\mathbb{C}))(j^{\ast}X,j^{\ast}Y)\]
of simplicial sets is a bijection on all degrees $n\geq 0$. But for each such integer $n\geq 0$, the map $j^{\ast}(X,Y)_n$ is 
isomorphic to the restriction
\[j^{\ast}(X,Y)\colon\mathrm{sPsh}(\mathcal{L}_{\Delta}(\mathbb{C},V))(X\otimes\Delta^n,Y)\rightarrow\mathrm{sPsh}(F_{\Delta}(\mathbb{C}))(j^{\ast}(X\otimes\Delta^n),j^{\ast}Y),\]
since simplicial presheaf categories are cotensored over $\mathbf{S}$, and $j^{\ast}$ is cocontinuous. Thus, it is bijective by Lemma~\ref{lemmasloc1} applied to $\mathbf{D}=\mathbf{S}$.
\end{proof}

Hence, the map $j\colon F_{\Delta}(\mathbb{C})\rightarrow\mathcal{L}_{\Delta}(\mathbb{C},V)$ induces both a 
localization
\begin{equation*}
(j_!,j^{\ast})\colon \mathbf{sPsh}(F_{\Delta}(\mathbb{C}))\rightarrow \mathbf{sPsh}(\mathcal{L}_{\Delta}(\mathbb{C},V)) 
\end{equation*}
and a colocalization
\begin{equation*}
(j^{\ast},j_{\ast})\colon \mathbf{sPsh}(\mathcal{L}_{\Delta}(\mathbb{C},V))\rightarrow \mathbf{sPsh}(F_{\Delta}(\mathbb{C}))
\end{equation*}
between simplicial presheaf categories. Equipping both sides with either the injective or the projective model structure, the 
restriction $j^{\ast}$ becomes a left, respectively right Quillen functor (\cite[Proposition A.3.3.7]{luriehtt}). By \cite[Theorem 2.2]{dkequivs} applied to the map
\[j\colon(F_{\Delta}(\mathbb{C}),F_{\Delta}(V))\rightarrow (\mathcal{L}_{\Delta}(C,V),\mathcal{L}_{\Delta}(C,V)^{\cong})\]
of relative simplicial categories (\cite[Section 2.2]{bksimploc}), the 
restriction (\ref{equpropffres}) remains fully faithful on associated homotopy theories. More precisely, equipping both sides with the 
projective model structure, the pair $(j_!,j^{\ast})$ becomes a homotopy localization and induces a Quillen-equivalence 
\begin{equation}\label{lemmapseudosheaves}
(j_!,j^{\ast})\colon \mathbf{sPsh}(\mathcal{L}_{\Delta}(\mathbb{C},V))_{\mathrm{proj}}\rightarrow \mathcal{L}_{y[F_{\Delta}(V)_0]}\mathbf{sPsh}(F_{\Delta}(\mathbb{C}))_{\mathrm{proj}},
\end{equation}
where the right hand side denotes the according left Bousfield localization.
Dually, equipping both sides in (\ref{equpropffres}) with the injective model structure, the pair $(j^{\ast},j_{\ast})$ becomes a homotopy colocalization. 

The simplicial localization functor $\mathcal{L}_{\Delta}\colon\mathrm{RelCat}\rightarrow\mathbf{S}\text{-Cat}$ has a
homotopy-inverse, the ``delocalization'' or ``flattening''
\[\flat\colon\mathbf{S}\text{-Cat}\rightarrow\mathrm{RelCat},\]
given by the Grothendieck construction of a given simplicial category $\mathbf{C}$ considered as a simplicial diagram
$\mathbf{C}\colon\Delta^{op}\rightarrow\mathrm{Cat}$. This functor 
was introduced in \cite[Theorem 2.5]{dkequivs} and is analysed in detail in \cite{bksimploc}.\\

Now, given a simplicial category $\mathbf{C}$, consider its delocalization
$\flat(\mathbf{C})\in\mathrm{RelCat}$. Cofibrantly replacing $\flat(\mathbf{C})$ with some pair $(\mathbb{P},V)$ in RelCat yields a 
direct relative poset $(\mathbb{P},V)$ weakly equivalent -- i.e.\ Rezk-equivalent in the language of \cite{bksimploc} -- to
$\flat(\mathbf{C})$. Hence, by \cite[Theorem 1.8]{bksimploc}, the simplicial localization
$\mathcal{L}_{\Delta}(\mathbb{P},V)\in\mathbf{S}\text{-Cat}$ is DK-equivalent to the original simplicial category $\mathbf{C}$. That 
means there is a zig-zag of DK-equivalences of the form
\begin{equation}\tag{$\ast$}
\mathbf{C}\xrightarrow{f_1}\dots\xleftarrow{f_n}\mathcal{L}_{\Delta}(\mathbb{P},V).
\end{equation}
By \cite[Proposition A.3.3.8]{luriehtt} or \cite[Theorem 2.1]{dkequivs} and the sequence of maps in $(\ast)$, we obtain 
a zig-zag of simplicial Quillen-equivalences
\[\xymatrix{
\mathbf{sPsh}(\mathcal{L}_{\Delta}(\mathbb{P},V))_{\mathrm{proj}}\ar@<.5ex>[r]^(.7){(f_n)_{!}}& \dots\ar@<.5ex>[l]^(.3){f_n^{\ast}} 
\ar@<-.5ex>[r]_(.3){f_1^{\ast}} & \mathbf{sPsh}(\mathbf{C})_{\mathrm{proj}}.\ar@<-.5ex>[l]_(.7){(f_1)_!}
}\]
Further recall from \cite[Proposition 2.6]{dksimploc} that for every category $\mathbb{C}$ the canonical 
projection $\varphi\colon F_{\Delta}\mathbb{C}\rightarrow\mathbb{C}$ is a DK-equivalence of simplicial 
categories. So, to summarize, we have gathered the following chain of Quillen-equivalences. 

\begin{proposition}\label{proptransferskeletonproj}
Let $\mathbf{C}$ be a small simplicial category. Then there is a direct relative poset $(\mathbb{P},V)$ together with a 
zig-zag of DK-equivalences
\[\mathbf{C}\rightarrow\dots\leftarrow\mathcal{L}_{\Delta}(\mathbb{P},V)\]
in $\mathbf{S}\text{-Cat}$ which induces a zig-zag of simplicial Quillen-equivalences of the form
\begin{align*}
\xymatrix{
\mathcal{L}_{y[V]}\mathbf{sPsh}(\mathbb{P})\ar@<-.5ex>[r]_(.45){\varphi^{\ast}} & \mathcal{L}_{y[F_{\Delta}(V)_0]}\mathbf{sPsh}(F_{\Delta}\mathbb{P})\ar@<-.5ex>[l]_(.55){\varphi_!}\ar@<.5ex>[r]^(.55){j_!} & \mathbf{sPsh}(\mathcal{L}_{\Delta}(\mathbb{P},V))\ar@<.5ex>[l]^(.45){j^{\ast}}\ar@{=}[d] \\
 \mathbf{sPsh}(\mathbf{C})\ar@<.5ex>[r]^(.6){(f_1)_{!}}  & \dots\ar@<.5ex>[l]^(.4){f_1^{\ast}}\ar@<-.5ex>[r]_(.4){f_n^{\ast}} & \mathbf{sPsh}(\mathcal{L}_{\Delta}(\mathbb{P},V)),\ar@<-.5ex>[l]_(.6){(f_n)_!}
}
\end{align*}
where all simplicial presheaf categories are equipped with the projective model structure.
\end{proposition}
\begin{proof}
The only part left to show is that $\varphi\colon F_{\Delta}\mathbb{C}\rightarrow\mathbb{C}$ induces a Quillen-equivalence of
given left Bousfield localizations, but this follows directly from \cite[Proposition 2.6]{dksimploc} together with
\cite[Corollary 3.8]{dkequivs}.
\end{proof}

\section{Compactness in combinatorial model categories}\label{secsmallvscompact}

We start with some facts about compactness in presheaf categories. Given a small 
category $\mathbb{C}$, we denote the cardinality of $\mathbb{C}$ by
\[|\mathbb{C}|:=\sum_{C,C\sprime\in\mathbf{C}}|\mathrm{Hom}_{\mathbb{C}}(C,C\sprime)|.\]
Given a (set-valued) presheaf $X\in\widehat{\mathbb{C}}$, its cardinality is denoted by
\[|X|:=\sum_{C\in\mathbb{C}}|X(C)|.\]
Given a regular cardinal $\kappa>|\mathbb{C}|$, recall that a presheaf
$X\in\widehat{\mathbb{C}}$ is $\kappa$-small if $|X|<\kappa$, that is if all its values $X(C)$ have cardinality smaller 
than $\kappa$. A map $f\colon X\rightarrow Y$ in $\widehat{\mathbb{C}}$ is $\kappa$-small if all its pullbacks along 
maps $Z\rightarrow Y$ with $\kappa$-small domain $Z$ are $\kappa$-small presheaves.
Equivalently, $f\colon X\rightarrow Y$ is $\kappa$-small if and only if for all objects $C\in\mathbb{C}$, the function 
$f(C)\colon X(C)\rightarrow Y(C)$ of sets has $\kappa$-small fibers.

Given a small simplicial category $\mathbf{C}$, we also denote the cardinality of $\mathbf{C}$ by
\[|\mathbf{C}|:=\sum_{C,C\sprime}|\mathrm{Hom}_{\mathbf{C}}(C,C\sprime)|\]
where the cardinality of the hom-objects $\mathbf{C}(C,C\sprime)\in\mathbf{S}$ is given by the cardinality 
of presheaves defined above.
Accordingly, given a regular cardinal $\kappa>|\mathbf{C}|$, a simplicial presheaf $X\in\mathrm{sPsh}(\mathbf{C})$ is
$\kappa$-small if all its values $X(C)$ are $\kappa$-small. A simplicial natural transformation
$f\colon X\rightarrow Y$ in $\mathrm{sPsh}(\mathbf{C})$ is $\kappa$-small if all its pullbacks along maps
$Z\rightarrow Y$ with $\kappa$-small domain $Z$ are $\kappa$-small simplicial presheaves.

\begin{remark}\label{remsmallptwise}
Again, a map $f\colon X\rightarrow Y$ in $\mathrm{sPsh}(\mathbf{C})$ is $\kappa$-small if and only if for all $C\in\mathbf{C}$, the map $f(C)\colon X(C)\rightarrow Y(C)$ is a $\kappa$-small map of simplicial sets. 
\end{remark}

The category $\mathrm{sPsh}(\mathbf{C})$ is locally finitely presentable (\cite[Examples 3.4, Proposition 7.5]{kellyfinlimits}), 
generated by (finite colimits of) the objects $yC\otimes\Delta^n$ for $C\in\mathbf{C}$ and $n\geq 0$ which we refer to in the 
following as the generators. When an ordinary category $\mathbb{C}$ is considered as a discrete simplicial category, we have an 
obvious isomorphism between $\mathrm{sPsh}(\mathbb{C})$ and the set-valued presheaf category $\widehat{\mathbb{C}\times\Delta^{op}}$.

\begin{notation}
To develop a general theory of accessible categories, Ad\'{a}mek and Rosick\'{y} introduce for pairs of regular cardinals $\mu<\kappa$ 
the \emph{sharply larger} relation (\cite[Definition 2.12]{adamekrosicky}) and a special case ``$\ll$'' in
\cite[Example 2.13.(4)]{adamekrosicky} which is used as well in \cite[Definition 5.4.2.8]{luriehtt} to develop a theory of accessible 
$(\infty,1)$-categories. Here, $\mu\ll\kappa$ if for all cardinals $\kappa_0<\kappa$ and $\mu_0<\mu$, also $\kappa_0^{\mu_0}<\kappa$.
\end{notation}

The order ``$\ll$'' is chosen in such a way that whenever $\mu\ll\kappa$ holds, then $\mu<\kappa$ and
$\mu$-accessibility of a quasi-category $\mathcal{C}$ implies $\kappa$-accessibility of $\mathcal{C}$
(\cite[Proposition 5.4.2.11]{luriehtt}). As noted in \cite{luriehtt}, the order is unbounded in the class of regular cardinals 
as for any regular cardinal $\mu$ we have $\mu\ll\mathrm{sup}\{\tau^{\mu}\mid\tau<\mu\}^+$.
In particular, we always find a regular cardinal sharply larger than a given regular $\mu$.
In fact, if $\mu$ is regular, then $\mu^+$ is already sharply larger than $\mu$ (\cite[Examples 2.13.(2)]{adamekrosicky}). Whenever
$\lambda<\mu$ is a regular cardinal and $\mu\ll\kappa$, then also $\lambda\ll\kappa$. Thus, for any set $X$ of cardinals there is a 
regular cardinal $\mu$ such that $\kappa\ll\mu$ for all $\kappa\in X$.

Recall that an object $C$ in an accessible category $\mathbb{C}$ is \emph{$\kappa$-compact} if its associated 
corepresentable preserves $\kappa$-directed colimits. A map $f$ in $\mathbb{C}$ is \emph{relatively $\kappa$-compact} if the pullback of 
$f$ along any map with $\kappa$-compact domain in $\mathcal{C}$ is itself again $\kappa$-compact.

\begin{lemma}\label{lemmacompactissmall}\mbox{ }
Let $\mathbf{C}$ be a small simplicial category and $\kappa\gg|\mathbf{C}|$ an infinite regular cardinal. Then
\begin{enumerate}
\item An object $X\in \mathrm{sPsh}(\mathbf{C})$ is $\kappa$-compact if and only if it is $\kappa$-small.
\item A map $f\in\mathrm{sPsh}(\mathbf{C})$ is relatively $\kappa$-compact if and only if it is $\kappa$-small.
\end{enumerate}
\end{lemma}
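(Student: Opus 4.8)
The plan is to prove both statements by reducing compactness in the locally presentable category $\mathrm{sPsh}(\mathbf{C})$ to the combinatorial notion of $\kappa$-smallness via a direct cardinality count on the generators. First I would recall that in a locally presentable category presented as a category of models, an object is $\kappa$-compact if and only if it is a $\kappa$-small colimit of (finitely) presentable generators. Concretely, since $\mathrm{sPsh}(\mathbf{C})$ is the presheaf category $\widehat{\mathbf{C}\times\Delta^{op}}$ when $\mathbf{C}$ is discrete, and in general is generated under colimits by the $yC\otimes\Delta^n$, every simplicial presheaf $X$ is the colimit of its (simplicial) category of elements, whose size is governed by $\sum_{C,n}|X(C)_n|$, which is commensurate with $|X|=\sum_C|X(C)|$ once $\kappa$ is large enough. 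The role of the hypothesis $\kappa\gg|\mathbf{C}|$ (rather than merely $\kappa>|\mathbf{C}|$) is exactly to absorb the bookkeeping: the index category $\mathbf{C}\times\Delta^{op}$ and the maps that must be tracked have size controlled by $|\mathbf{C}|$, and the sharply-larger relation guarantees that forming hom-sets, finite limits, and the relevant exponentials stays below $\kappa$.

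For part (1), I would argue in both directions. If $|X|<\kappa$, then $X$ is a colimit of a $\kappa$-small diagram of generators (its category of elements has fewer than $\kappa$ objects and morphisms, using $\kappa\gg|\mathbf{C}|$ to bound the contribution of $\mathbf{C}$), and any $\kappa$-small colimit of $\kappa$-compact objects is $\kappa$-compact; the generators $yC\otimes\Delta^n$ are visibly $\kappa$-compact since $\kappa>|\mathbf{C}|$ makes each representable $\kappa$-small. Conversely, if $X$ is $\kappa$-compact, write $X$ as the filtered colimit of its $\kappa$-small subpresheaves (this filtered system is $\kappa$-filtered precisely because $\kappa$ is regular and $\kappa\gg|\mathbf{C}|$ ensures the subobjects of bounded size are closed under the requisite unions); then $\mathrm{id}_X$ factors through one of them, so $X$ is a retract of a $\kappa$-small presheaf, hence itself $\kappa$-small — using that $\kappa$-smallness is clearly retract-closed.

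For part (2), I would reduce relative $\kappa$-compactness of a map $f\colon X\to Y$ to part (1) applied fibrewise. Recall that $f$ is relative $\kappa$-compact iff for every $\kappa$-compact $Z$ and every map $Z\to Y$ the pullback $Z\times_Y X$ is $\kappa$-compact. By part (1) this is the same as demanding that every such pullback be $\kappa$-small, which is precisely the definition given in the excerpt of a $\kappa$-small map — provided one knows the two quantifier ranges (over $\kappa$-compact $Z$ versus over $\kappa$-small $Z$) agree, which is exactly part (1). One direction is immediate; for the other, one checks that it suffices to test on the generators $Z = yC\otimes\Delta^n$ mapping into $Y$, since a general $\kappa$-small $Z$ is a $\kappa$-small colimit of such and pullback preserves colimits in a presheaf category, so $\kappa$-smallness of the pullback follows from $\kappa$-smallness of the pieces together with the regularity of $\kappa$.

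The main obstacle, and the one step I would treat with care rather than wave through, is the bookkeeping that makes the category of elements of a $\kappa$-small presheaf genuinely $\kappa$-small as a diagram — i.e.\ controlling not just the objects but the morphisms and the coherence data in the simplicially-enriched setting, where ``category of elements'' must be taken in an enriched sense. This is precisely where $\kappa\gg|\mathbf{C}|$ rather than $\kappa>|\mathbf{C}|$ is used: one needs $|\mathbf{C}|^{<\kappa}$-type bounds to stay below $\kappa$, which is what the sharply-larger relation buys. I expect everything else — retract-closure of $\kappa$-smallness, the pullback-stability reduction to generators in part (2), and $\kappa$-compactness of the generators themselves — to be routine.
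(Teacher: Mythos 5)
Your argument is correct and follows essentially the same route as the paper: both directions of (1) reduce $\kappa$-compactness to $\kappa$-smallness by writing presheaves as $\kappa$-small colimits of the generators $yC\otimes\Delta^n$ and using retract-closure of smallness, and (2) is then immediate because, by (1), the quantifier ranges and the conclusions in the two definitions coincide. The only cosmetic difference is that for the direction ``compact implies small'' the paper invokes the Ad\'amek--Rosick\'y retract characterization of $\kappa$-compact objects, whereas you run the equivalent canonical argument with the $\kappa$-filtered poset of $\kappa$-small subpresheaves and factor the identity.
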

\begin{proof}
Let $\mathbf{C}$ be a small simplicial category.
For Part 1, recall that a presheaf $X$ is $\kappa$-compact if and only 
if it is a retract of a $\kappa$-small directed colimit of finite colimits of the generators $yC\otimes\Delta^n$ via
\cite[Remark 2.15]{adamekrosicky}. But $\kappa\gg|\mathbf{C}|$ being infinite and regular implies that all generators 
$yC\otimes\Delta^n$ are $\kappa$-small, and hence so are all their finite colimits. Hence, every $\kappa$-compact presheaf $X$ is a 
subobject of a $\kappa$-small colimit of
$\kappa$-small presheaves and hence $\kappa$-small. Vice versa, every simplicial presheaf $X$ is the colimit of its canonical diagram 
of generators $yC\otimes\Delta^n$; 
whenever $X$ is $\kappa$-small, so is the associated canonical 
diagram by the Yoneda Lemma. Thus, 
$X$ is a $\kappa$-small colimit of the generators $yC\otimes\Delta^n$. Closing the generators under finite colimits gives a 
description of $X$ as a $\kappa$-small directed colimit of $\kappa$-compact objects, so $X$ is $\kappa$-compact again 
by \cite[Remark 2.15]{adamekrosicky}. Part 2 follows directly from Part 1 by definition.
\end{proof}

\begin{lemma}\label{lemmaadjpressmall}
Let $\mathbb{C}$ and $\mathbb{D}$ be locally presentable categories and let 
\[\xymatrix{F\colon\mathbb{C}\ar@<.5ex>[r] & \mathbb{D}\colon G\ar@<.5ex>[l]}\]
be an adjoint pair. Let $\kappa$ be a regular cardinal such that 
\begin{enumerate}
\item both $F$ and $G$ preserve $\kappa$-compact objects,
\item $\kappa$-compact objects in $\mathbb{C}$ are closed under fiber products.
\end{enumerate}
Then $G$ preserves relatively $\kappa$-compact maps.
\end{lemma}

\begin{proof}
Straight-forward.
\end{proof}

\begin{remark}\label{remsmalllex}
Although the class of $\mu$-compact objects in a locally $\kappa$-presentable category is not necessarily closed under fiber products 
for all $\mu\gg\kappa$, the class of such $\mu$ is unbounded in the class of regular cardinals sharply larger than $\kappa$ (\cite[Proposition 4.5]{shulmanuniverses}). It nevertheless is a trivial property in the case of (simplicial) presheaf 
categories whenever $\mu$ is infinite by Lemma~\ref{lemmacompactissmall}, since $\mu$-smallness of sets is closed under finite limits.
\end{remark}

\begin{corollary}\label{coradjpressmall}
Let $\mathbf{C}$ and $\mathbf{D}$ be small simplicial categories and let 
\[\xymatrix{F\colon\mathrm{sPsh}(\mathbf{C})\ar@<.5ex>[r] & \mathrm{sPsh}(\mathbf{D})\colon G\ar@<.5ex>[l]}\]
be an adjoint pair. Let $\kappa\gg\mathrm{max}(|\mathbf{C}|,|\mathbf{D}|)$ be regular and suppose both $F$ and $G$ preserve $\kappa$-small 
objects. Then $G$ preserves $\kappa$-small maps.
\end{corollary}

\begin{proof}
Follows directly from Lemma~\ref{lemmacompactissmall}.2, Lemma~\ref{lemmaadjpressmall} and Remark~\ref{remsmalllex}.
\end{proof}

The aim of this section is to compare this ordinary notion of compactness in a combinatorial model category $\mathbb{M}$
with the notion of compactness in its underlying $(\infty,1)$-category $\mathrm{Ho}_{\infty}(\mathbb{M},\mathcal{W})$
(\cite[Definition 5.3.4.5, Definition 6.1.6.4]{luriehtt}). The validity of this comparison was addressed in a question posted in 
\cite{mocompactobjects} by Shulman; for objects it is given in Proposition~\ref{lemmacomparesmallobjects} in a special case, and in 
Corollary~\ref{corcmptduggerpres} in general. For maps it is given in Theorem~\ref{thmcmptduggerpres} in case $\mathbb{M}$ is (the 
left Bousfield localization of) a simplicial presheaf category equipped with the projective model structure, and in 
Theorem~\ref{thmcmptinj} in case it is such equipped with the injective model structure. An argument for the objectwise statement was 
outlined by Lurie in the same post, which in one direction coincides with our proof given in Proposition~\ref{lemmacomparesmallobjects}. Before we state the theorems, we make the following ad hoc construction, give one 
auxiliary folkore lemma, and define a simple axiomatic framework of minimal fibrations in a general model category that will provide a 
convenient set-up to state intermediate results in.\\

Given a $\lambda$-accessible quasi-category $\mathcal{C}$ with generating set $A$ and a regular cardinal
$\mu\geq\lambda$, define the full subcategory $J^{\mu}\subseteq\mathcal{C}$ recursively 
as follows. Let
\[J^{\mu,0}_0:=A\]
and $J^{\mu,0}$ be the full subcategory of $\mathcal{C}$ generated by $J^{\mu,0}_0$. 
Whenever $\beta<\mu$ is a limit ordinal, let
\[J^{\mu,\beta}_0=\bigcup_{\alpha<\beta}J^{\mu,\alpha}_0\]
and $J^{\mu,\beta}$ the full subcategory generated by $J^{\mu,\beta}_0$. On successors, given $J^{\mu,\alpha}$, let
\begin{equation}\label{deffreecompacts}
J^{\mu,\alpha+1}_0:=\{\mathrm{colim}F\mid F\colon I\rightarrow J^{\mu,\alpha},I\in\mathrm{QCat}\text{ is $\mu$-small and $\lambda$-filtered}\}
\end{equation}
(so we choose a set of representatives $V_{\mu}^{\Delta^{op}}$ for $\mu$-small simplicial sets) and
$J^{\mu,\alpha+1}$ be the corresponding full subcategory. Eventually, we define the full subcategory
$J^{\mu}$ of $\mathcal{C}$ to have the set of objects
\[J^{\mu}_0:=\bigcup_{\alpha<\mu}J^{\mu,\alpha}_0.\] 

The following lemma is noted in \cite[Section 5.4.2]{luriehtt}, and is a generalization of the corresponding 1-categorical 
statement for accessible categories (\cite[Remark 2.15]{adamekrosicky}).

\begin{lemma}\label{lemmapreservecompact}
Let $\mathcal{C}$ and $\mathcal{D}$ be presentable quasi-categories.
\begin{enumerate}
\item Suppose $\mathcal{C}$ is $\lambda$-presentable. Then, for every regular $\mu\gg\lambda$,
the $\mu$-compact objects in $\mathcal{C}$ are, up to equivalence, exactly the retracts of objects in
$J^{\mu}$.
\item Let $F\colon\mathcal{C}\rightarrow\mathcal{D}$ be an accessible functor. Then there is a cardinal $\mu$ 
such that $F$ preserves $\kappa$-compact objects for all regular $\kappa\gg\mu$.
\end{enumerate}
\end{lemma}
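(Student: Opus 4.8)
The plan is to imitate the classical $1$-categorical arguments of \cite[Remark 2.15]{adamekrosicky} within the quasi-categorical framework of \cite[Section 5.4]{luriehtt}, using the recursively-defined subcategories $J^{\mu}$ as the higher analogue of the closure under $\mu$-small $\lambda$-filtered colimits. For part (a), one inclusion is essentially a counting argument: each $J^{\mu,\alpha}$ has at most $\mu$ objects (by transfinite induction, using that at successor stages we only add colimits indexed by a \emph{set} $V_{\mu}^{\Delta^{op}}$ of $\mu$-small simplicial sets over the previous stage, and $\mu$ is regular so the union over $\alpha<\mu$ stays of size $\leq\mu$); moreover every object of $J^{\mu}$ is a $\mu$-small $\lambda$-filtered colimit of $\lambda$-compact objects, hence is $\mu$-compact by \cite[Proposition 5.4.2.5 / Corollary 5.3.4.15]{luriehtt}, and retracts of $\mu$-compact objects are $\mu$-compact by \cite[Corollary 5.1.6.4]{luriehtt}. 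For the converse, given a $\mu$-compact object $X$, write it (using $\lambda$-presentability) as a $\lambda$-filtered colimit $X\simeq\operatorname{colim}_{i\in I}A_i$ of objects $A_i\in A$; by the standard trick (\cite[Proposition 5.4.2.8 / 5.3.4.17]{luriehtt}) one filters $I$ by $\mu$-small $\lambda$-filtered subcategories $I_\sigma$ whose colimits land in $J^{\mu}$ (here one uses $\mu\gg\lambda$ to guarantee that $\mu$-small $\lambda$-filtered pieces cover $I$), obtaining $X$ as a $\lambda$-filtered colimit of objects of $J^{\mu}$; $\mu$-compactness of $X$ then forces the identity of $X$ to factor through one of these, exhibiting $X$ as a retract of an object of $J^{\mu}$.

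For part (b), let $F$ be $\nu$-accessible, so $F$ preserves $\nu$-filtered colimits and, being accessible, preserves $\nu_0$-compact objects for some $\nu_0$; set $\mu$ to be a regular cardinal with $\mu\gg\nu$, $\mu\gg\nu_0$, and $\mu\gg\lambda$ where $\mathcal{C}$ is $\lambda$-presentable. Given regular $\kappa\gg\mu$ and a $\kappa$-compact $X\in\mathcal{C}$, part (a) presents $X$ as a retract of an object built by the $J^{\kappa}$-recursion starting from $\kappa$-small $\nu$-filtered colimits of $\ldots$ of $\nu_0$-compact objects; since $F$ preserves $\nu_0$-compact objects, $\nu$-filtered colimits, $\kappa$-small colimits, and retracts, $F$ carries such an object into the analogous closure inside $\mathcal{D}$, which again by part (a) (applied to $\mathcal{D}$, which is $\mu'$-presentable for some $\mu'\ll\kappa$) consists of $\kappa$-compact objects.

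The main obstacle I expect is the bookkeeping in the converse direction of part (a): making precise the quasi-categorical version of ``every $\lambda$-filtered poset is the $\mu$-filtered union of its $\mu$-small $\lambda$-filtered sub-posets'' and ensuring the colimits of these sub-diagrams genuinely land in the recursively-built $J^{\mu}$ rather than merely in some abstract idempotent-completion. This is exactly the content that \cite[Section 5.4.2]{luriehtt} handles via the sharply-larger relation, and the role of the hypothesis $\mu\gg\lambda$ (resp.\ $\kappa\gg\mu$) is precisely to make the relevant covering and stability statements go through; I would cite \cite[Proposition 5.4.2.11, 5.4.2.5]{luriehtt} rather than reprove them, and confine the new work to checking that the explicitly defined $J^{\mu}$ of \eqref{deffreecompacts} is a legitimate choice of the "canonical" generating family so that these general results apply verbatim.
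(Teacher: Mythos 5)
Your plan is correct and is essentially the intended proof: the paper itself defers to the author's thesis and explicitly frames the lemma as the quasi-categorical adaptation of \cite[Remark 2.15]{adamekrosicky} via \cite[Section 5.4.2]{luriehtt}, which is exactly the route you take (counting plus closure of $\kappa$-compacts under $\kappa$-small colimits and retracts for one inclusion; decomposition of a $\lambda$-filtered diagram into $\mu$-small $\lambda$-filtered pieces, using $\mu\gg\lambda$, plus the retract trick for the other; uniformization for part (2)).

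One slip to fix in part (2): an accessible functor does \emph{not} in general preserve $\kappa$-small colimits, so that item should be struck from your list of things $F$ preserves. It is also not needed: after enlarging the presentability index so that $\lambda\geq\nu$, every colimit occurring in the recursion \eqref{deffreecompacts} defining $J^{\kappa}$ is $\lambda$-filtered, hence $\nu$-filtered, hence preserved by $F$; combined with the fact that the images of the (set of) generators are uniformly $\tau$-compact for some $\tau$ (which is the precise content behind your phrase ``preserves $\nu_0$-compact objects for some $\nu_0$''), the argument closes as you describe.
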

\begin{proof}
See \cite[Lemma 8.3.4]{thesis}.
\end{proof}

\begin{notation}\label{notationlarge}
The following group of statements will in each case claim that a certain comparison holds for all $\kappa$ 
``sufficiently large'' or ``large enough''. That means in each case there is a cardinal $\mu$ such that for all
$\kappa\gg\mu$ the given statement holds true. As we are not interested in a precise formula for the lower bound
$\mu$, we generally will not make the cardinal $\mu$ explicit. Instead, we note that we will have to impose the 
condition on $\kappa$ to be ``large enough'' only finitely often and eventually take the corresponding supremum.
\end{notation}

\begin{definition}\label{defminimalfibs}
Let $\mathbb{M}$ be a model category. Say $\mathbb{M}$ has a \emph{theory of minimal fibrations} if there is a pullback stable class
$\mathcal{F}^{\mathrm{min}}_{\mathbb{M}}$ of fibrations in $\mathbb{M}$ -- the class of \emph{minimal fibrations} -- such that the following hold.
\begin{enumerate}
\item Let $p\colon X\twoheadrightarrow Y$ and $q\colon X\sprime\twoheadrightarrow Y$ be minimal fibrations. Then every weak equivalence 
between $X$ and $X\sprime$ over $Y$ is an isomorphism.
\item For every fibration $p\colon X\twoheadrightarrow Y$ in $\mathbb{M}$ there is an acyclic cofibration
$M\overset{\sim}{\hookrightarrow}X$ such that the restriction $M\rightarrow Y$ is a minimal fibration.
\end{enumerate}
\end{definition}

\begin{lemma}\label{lemmacmpt0}
Let $\mathbb{M}$ be a model category with a theory of minimal fibrations. Let $T$ be a class of maps in $\mathbb{M}$ such that the left Bousfield localization $\mathcal{L}_T \mathbb{M}$ exists. Then the model category $\mathcal{L}_{T}\mathbb{M}$ has a theory of 
minimal fibrations.
\end{lemma}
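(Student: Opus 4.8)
The plan is to declare the minimal fibrations of $\mathcal{L}_T(\mathbb{M})$ to be precisely those minimal fibrations of $\mathbb{M}$ which remain fibrations after localizing, i.e.\ to set $\mathcal{F}^{\mathrm{min}}_{\mathcal{L}_T(\mathbb{M})}:=\mathcal{F}^{\mathrm{min}}_{\mathbb{M}}\cap\mathrm{Fib}(\mathcal{L}_T(\mathbb{M}))$, and then to verify conditions (1) and (2) of Definition~\ref{defminimalfibs} for this class. Throughout one invokes the standard description of a left Bousfield localization (\cite[Chapter~3]{hirschhorn03}): $\mathcal{L}_T(\mathbb{M})$ has the same underlying category and the same cofibrations as $\mathbb{M}$, every weak equivalence of $\mathbb{M}$ is one of $\mathcal{L}_T(\mathbb{M})$, and every fibration of $\mathcal{L}_T(\mathbb{M})$ is a fibration of $\mathbb{M}$. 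In particular all cofibrations of $\mathcal{L}_T(\mathbb{M})$ are monomorphisms, so Definition~\ref{defminimalfibs} applies; and pullback stability of $\mathcal{F}^{\mathrm{min}}_{\mathcal{L}_T(\mathbb{M})}$ is immediate, since fibrations in any model category are pullback stable, $\mathcal{F}^{\mathrm{min}}_{\mathbb{M}}$ is so by hypothesis, and pullbacks in $\mathbb{M}$ and $\mathcal{L}_T(\mathbb{M})$ agree because the underlying categories do.

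For condition (2), given a fibration $p\colon X\twoheadrightarrow Y$ of $\mathcal{L}_T(\mathbb{M})$ -- hence also of $\mathbb{M}$ -- I would apply condition (2) for $\mathbb{M}$ to get an acyclic cofibration $i\colon M\overset{\sim}{\hookrightarrow}X$ of $\mathbb{M}$ with $q:=p\circ i\colon M\to Y$ a minimal fibration of $\mathbb{M}$. Since cofibrations are shared and the class of weak equivalences only grows, $i$ is still an acyclic cofibration of $\mathcal{L}_T(\mathbb{M})$, so it remains to check $q\in\mathrm{Fib}(\mathcal{L}_T(\mathbb{M}))$. For this I would use a retract argument: as $i$ is an acyclic cofibration and $q$ a fibration of $\mathbb{M}$, the square with $\mathrm{id}_M$ on top, $i$ on the left, $q$ on the right and $p$ on the bottom has a diagonal filler $r\colon X\to M$ with $r\circ i=\mathrm{id}_M$ and $q\circ r=p$; the pairs $(i,\mathrm{id}_Y)$ and $(r,\mathrm{id}_Y)$ then exhibit $q$ as a retract of $p$ in the arrow category of $\mathbb{M}$, and since the fibrations of $\mathcal{L}_T(\mathbb{M})$ are closed under retracts, $q$ is one of them. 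Hence $q\in\mathcal{F}^{\mathrm{min}}_{\mathcal{L}_T(\mathbb{M})}$ and $i$ is the required minimal replacement.

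For condition (1), let $p\colon X\twoheadrightarrow Y$ and $q\colon X\sprime\twoheadrightarrow Y$ lie in $\mathcal{F}^{\mathrm{min}}_{\mathcal{L}_T(\mathbb{M})}$ and let $f\colon X\to X\sprime$ be a homotopy equivalence over $Y$ in $\mathcal{L}_T(\mathbb{M})$. Since $p$ and $q$ are already minimal fibrations of $\mathbb{M}$, it is enough to show that $f$ is a homotopy equivalence over $Y$ in $\mathbb{M}$, for then condition (1) for $\mathbb{M}$ forces $f$ to be an isomorphism. The plan here is to argue that the relation ``homotopy equivalent over $Y$'' on total spaces of $\mathbb{M}$-fibrations over $Y$ is unaffected by the localization: working in the slice model structures over $Y$, a cylinder object built in $\mathbb{M}$ is still one in $\mathcal{L}_T(\mathbb{M})$ because the two share their cofibrations, so every fiberwise homotopy in $\mathbb{M}$ survives; for the reverse direction one exploits that $p$ and $q$ are fibrations of $\mathbb{M}$ to lift a fiberwise homotopy witnessed in $\mathcal{L}_T(\mathbb{M})$ back along them, and -- in the situations where $Y$ is fibrant in $\mathcal{L}_T(\mathbb{M})$, so that $X$ and $X\sprime$ are $T$-local -- that a $T$-local equivalence between $T$-local objects is already an $\mathbb{M}$-equivalence. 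I expect this comparison of the two notions of fiberwise homotopy equivalence to be the main obstacle; the rest -- pullback stability, condition (2) via the retract trick, and the reduction of condition (1) to the corresponding statement for $\mathbb{M}$ -- is formal manipulation with the structure of a left Bousfield localization.
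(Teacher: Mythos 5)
Your proposal takes the same route as the paper: you define the minimal fibrations of $\mathcal{L}_T(\mathbb{M})$ to be exactly the paper's class $\mathcal{F}^{\mathrm{min}}_{\mathbb{M}}\cap\mathrm{Fib}(\mathcal{L}_T(\mathbb{M}))$, and pullback stability is handled identically. For condition (2) the paper simply cites \cite[Proposition 3.4.6]{hirschhorn03}, which says that a map weakly equivalent over $Y$ to a local fibration is itself a local fibration provided it is a fibration in $\mathbb{M}$; your retract argument (lift $\mathrm{id}_M$ through the acyclic cofibration $i$ against the $\mathbb{M}$-fibration $q$, exhibit $q$ as a retract of $p$, use closure of $\mathrm{Fib}(\mathcal{L}_T(\mathbb{M}))$ under retracts) is a correct, self-contained proof of precisely the instance needed, so this is only a cosmetic difference.

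Where you diverge is condition (1), which you flag as the main obstacle and for which your sketch is both incomplete and too restrictive: the appeal to $T$-locality requires $Y$ to be fibrant in $\mathcal{L}_T(\mathbb{M})$, an assumption the lemma does not give you. The paper declares (1) immediate, and the justification is that ``homotopy equivalence over $Y$'' in Definition~\ref{defminimalfibs} should not be read as varying with the model structure: in every instance the paper actually uses (Cisinski's minimal fibrations over Eilenberg-Zilber Reedy categories, and simplicial model categories generally), fiberwise homotopy is taken with respect to a fixed cylinder coming from the (co)tensoring, and a left Bousfield localization changes neither the underlying category, nor the cofibrations, nor that enrichment. Under that reading the homotopy relation over $Y$ is literally unchanged, so (1) for $\mathcal{L}_T(\mathbb{M})$ is a special case of (1) for $\mathbb{M}$ applied to a smaller class of fibrations, and no comparison of homotopy relations is needed. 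You are right that if one insists on interpreting ``homotopy equivalence'' internally to the slice of $\mathcal{L}_T(\mathbb{M})$ (more weak equivalences, hence more cylinder objects), the transfer is no longer formal; but the fix is to pin down the intended, localization-invariant notion of fiberwise homotopy, not to route through $T$-local objects.
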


\begin{proof}
Given a model category $\mathbb{M}$ and a class $T$ of maps in $\mathbb{M}$ as stated, simply define the class
$\mathcal{F}^{\mathrm{min}}_{T}$ of minimal fibrations in $\mathcal{L}_T\mathbb{M}$ to be the class of 
fibrations in $\mathcal{L}_T\mathbb{M}$ which are minimal fibrations in $\mathbb{M}$. Pullback stability of
$\mathcal{F}^{\mathrm{min}}_{T}$ is immediate. Property 1 follows readily, as $T$-local weak equivalences between
$T$-local fibrations are weak equivalences in $\mathbb{M}$ itself. For Property 2, let
$p\colon X\twoheadrightarrow Y$ be a fibration in $\mathcal{L}_T\mathbb{M}$. By the assumption that $\mathbb{M}$ has a 
theory of minimal fibrations, there is an acyclic cofibration $M\overset{\sim}{\hookrightarrow} X$ in $\mathbb{M}$ such 
that the restriction $M\twoheadrightarrow Y$ is a minimal fibration in $\mathbb{M}$. But $M\rightarrow X$ is a weak 
equivalence from the fibration $M\twoheadrightarrow Y$ to the fibration $p\colon X\twoheadrightarrow Y$ over $Y$. The 
latter is a fibration in $\mathcal{L}_T\mathbb{M}$ and it hence follows by \cite[Proposition 3.4.6]{hirschhorn03} that
$M\twoheadrightarrow Y$ is a fibration in $\mathcal{L}_T\mathbb{M}$, too. 
\end{proof}

\begin{proposition}\label{lemmacomparesmallobjects}
Let $\mathbb{M}$ be a combinatorial model category. 
\begin{enumerate}
\item For all sufficiently large regular cardinals $\kappa$, an object $C$ in $\mathrm{Ho}_{\infty}(\mathbb{M})$ 
is $\kappa$-compact if there is a $\kappa$-compact object $D\in\mathbb{M}$ such that $C\simeq D$ in
$\mathrm{Ho}_{\infty}(\mathbb{M})$. 
\item Suppose $\mathbb{M}$ has a theory of minimal fibrations. Then the converse of Part 1 holds. 
\end{enumerate}
\end{proposition}

\begin{proof}
For Part 1 and $\kappa$ large enough, $\kappa$-filtered colimits in $\mathbb{M}$ are homotopy colimits and the $\kappa$-compact 
objects in $\mathbb{M}$ are exactly the $\kappa$-compact objects in the quasi-category $N(\mathbb{M})$. So the localization
$N(\mathbb{M})\rightarrow\mathrm{Ho}_{\infty}(\mathbb{M})$ preserves $\kappa$-filtered colimits and hence 
is $\kappa$-accessible. The statement now follows from Lemma \ref{lemmapreservecompact}.

For Part 2, we note that by our assumption and by Dugger's presentation theorem for combinatorial model categories
\cite[Theorem 1.1]{duggersmallpres} it suffices to show the statement for objects $C\in J^{\kappa}$ on the one hand and left Bousfield 
localizations of simplicial presheaf categories $\mathrm{sPsh}(\mathbb{C})$ on the other. Indeed, given a combinatorial model category 
$\mathbb{M}$ together with a category $\mathbb{C}$, a set $T\subset \mathrm{sPsh}(\mathbb{C})$ of arrows and a Quillen-equivalence of 
the form
\[\xymatrix{\mathcal{L}_T(\mathrm{sPsh}(\mathbb{C}))_{\mathrm{proj}}\ar@<.5ex>[r]^(.7)L & \mathbb{M},\ar@<.5ex>[l]^(.3)R}\]
suppose we have shown the statement for all $C\in J^{\kappa}$ and all $\kappa$ large enough in the case of
$\mathcal{L}_T(\mathrm{sPsh}(\mathbb{C}))_{\mathrm{proj}}$. Then, as both categories $\mathbb{M}$ and $\mathrm{sPsh}(\mathbb{C})$ are 
presentable, we find $\kappa\gg|\mathbb{C}|$ large enough such that the right adjoint $R$ preserves $\kappa$-compact objects. 
Certainly the derived functors $\mathbb{L}L$ and $\mathbb{R}R$ preserve $\kappa$-compactness in $\mathrm{Ho}_{\infty}(\mathbb{M})$, so 
whenever an object $C\in\mathrm{Ho}_{\infty}(\mathbb{M})$ is contained in $J^{\kappa}$, we may choose a $\kappa$-compact presheaf
$D\in \mathrm{sPsh}(\mathbb{C})$ weakly equivalent to $\mathbb{R}RX$. Without loss of generality $D$ is 
cofibrant by \cite[Proposition 2.3.(iii)]{duggersmallpres} and so $L(D)$ is $\kappa$-compact in $\mathbb{M}$ and  
presents $C$ in $\mathrm{Ho}_{\infty}(\mathbb{M})$.

Now, every $\kappa$-compact object $A\in\mathrm{Ho}_{\infty}(\mathbb{M})$ is the 
retract of an object $C\in J^{\kappa}$ by Lemma~\ref{lemmapreservecompact}.1. We thus may present $A$ by a bifibrant object 
$B\in\mathbb{M}$, and $C$ by a $\kappa$-compact bifibrant object $D\in\mathbb{M}$ again via
\cite[Proposition 2.3.(iii)]{duggersmallpres} and by the above. This yields a map $j\colon B\rightarrow D$ with homotopy retract 
$r\colon D\rightarrow B$ in $\mathbb{M}$. Pick a minimal fibrant object $\iota\colon M\hookrightarrow B$. Since every acyclic 
cofibration between fibrant objects allows a retract $\rho$ itself, we see that $M$ is a homotopy retract of $D$. Hence, the 
composition $(\rho r)(j\iota)$ is homotopic to the identity $1_M$ and thus a homotopy equivalence (\cite[Theorem 1.2.10.(iv)]{hovey}). 
It follows that it is an isomorphism in virtue of minimality of $M$. Thus, $M$ is a retract of $D$ and as such
$\kappa$-compact in $\mathbb{M}$ itself.

Therefore, assume $\mathbb{M}=\mathcal{L}_T(\mathrm{sPsh}(\mathbb{C}))_{\mathrm{proj}}$, and suppose
$C\in\mathrm{Ho}_{\infty}(\mathbb{M})$ is contained in $J^{\kappa}$.
The representatives for the colimits in the
construction of $(J^{\kappa,\alpha}|\alpha<\kappa)$ can be chosen to be homotopy colimits of strict diagrams
$F\colon I\rightarrow\mathbb{M}$ for $\kappa$-small categories $I$ by \cite[Proposition 4.2.3.14]{luriehtt} and
\cite[Proposition 1.3.4.25]{lurieha}. Hence, they can be computed according to the Bousfield-Kan formula
\[
\mathrm{hocolim}F=\mathrm{coeq}\left(
\xymatrix{
\coprod\limits_{i\rightarrow j}F(i)\otimes N(j/I)^{\mathrm{op}}
\ar@<.5ex>[r]\ar@<-.5ex>[r] & \coprod\limits_i F(i)\otimes N(i/I)^{\mathrm{op}}
}
\right)\]
because $\mathbb{M}=\mathcal{L}_{T}\mathrm{sPsh}(\mathbb{C})_{\mathrm{proj}}$ is a simplicial model 
category (\cite[Example 18.3.6]{hirschhorn03}). But this representative of the homotopy colimit is $\kappa$-compact whenever $I$ is
$\kappa$-small and furthermore each $F(i)$ for $i\in I$ in $\mathbb{M}$ is $\kappa$-compact. Hence, by induction, every object
$C\in J^{\kappa}$ is presented by a $\kappa$-compact object $D$ in $\mathrm{sPsh}(\mathbb{C})$.
\end{proof}

In the following we generalize Proposition~\ref{lemmacomparesmallobjects} to relatively $\kappa$-compact maps. 

\begin{proposition}\label{propcmpct1}
Let $\mathbb{M}$ be a combinatorial model category. 
\begin{enumerate}
\item Suppose the converse of Proposition~\ref{lemmacomparesmallobjects}.1 holds in $\mathbb{M}$. Then for all sufficiently large regular 
cardinals $\kappa$, a morphism $f\colon C\rightarrow D$ in $\mathrm{Ho}_{\infty}(\mathbb{M})$ is relatively $\kappa$-compact if there is a 
relatively $\kappa$-compact fibration $p\in\mathbb{M}$ between fibrant objects such that $p\simeq f$ in $\mathrm{Ho}_{\infty}(\mathbb{M})$. 
\item Suppose $\mathbb{M}$ has a theory of minimal fibrations and $\kappa$-compact objects in $\mathbb{M}$ are closed under fiber products. 
Then the converse of Part 1.\ holds. 
\end{enumerate}
\end{proposition}

\begin{proof}
For Part 1, let $p\colon X\twoheadrightarrow Y$ be a relatively $\kappa$-compact fibration between fibrant objects in
$\mathbb{M}$. Let $g\colon A\rightarrow Y$ be a map in $\mathrm{Ho}_{\infty}(\mathbb{M})$ with $\kappa$-compact domain. In order to show that 
the pullback of $X$ along $g$ 
is $\kappa$-compact in $\mathrm{Ho}_{\infty}(\mathbb{M})$, by assumption we can present $A$ by a
$\kappa$-compact object $A\sprime$ in $\mathbb{M}$. Without loss of generality $A\sprime$ is bifibrant by 
\cite[Proposition 2.3.(iii)]{duggersmallpres}, so we obtain a map $g\sprime\colon A\sprime\rightarrow Y$ presenting $g$.
Also the pullback $(g\sprime)^{\ast}X$ is a homotopy pullback and it is $\kappa$-compact in $\mathbb{M}$ by assumption. Hence, it is
$\kappa$-compact in $\mathrm{Ho}_{\infty}(\mathbb{M})$ again by Proposition~\ref{lemmacomparesmallobjects}. This shows that $p$ is 
relatively $\kappa$-compact in $\mathrm{Ho}_{\infty}(\mathbb{M})$.

For Part 2, assume that $f\colon C\rightarrow D$ is relatively
$\kappa$-compact in $\mathrm{Ho}_{\infty}(\mathbb{M})$ and $p\colon X\twoheadrightarrow Y$ is a fibration in
$\mathbb{M}$ such that $Y$ is 	fibrant in $\mathbb{M}$ and $p\simeq f$ in $\mathrm{Ho}_{\infty}(\mathbb{M})$. By
assumption there is an acyclic cofibration $M\overset{\sim}{\hookrightarrow} X$ such that the restriction
$m\colon M\twoheadrightarrow Y$ of $p$ is a minimal fibration. As $m$ and $p$ 
are homotopy equivalent over $Y$, the fibration $m$ is relatively $\kappa$-compact in
$\mathrm{Ho}_{\infty}(\mathbb{M})$, too. We want to show that $m$ is a relatively $\kappa$-compact fibration in $\mathbb{M}$.  
Therefore, let $g\colon Z\rightarrow Y$ be a map for some $\kappa$-compact object $Z\in\mathbb{M}$; we have to show that the pullback 
\[\xymatrix{
g^{\ast}M\ar[r]\ar[d]_{g^{\ast}m}\ar@{}[dr]|(.3){\pbs} & M\ar[d] \\
Z\ar[r]_g & Y
}\]
is a $\kappa$-compact object in $\mathbb{M}$ as well.
By \cite[Proposition 2.3.(iii)]{duggersmallpres} there is a $\kappa$-compact fibrant replacement $RZ$ of $Z$. Since the object $Y$ itself is fibrant, we obtain an extension $g\sprime\colon RZ\rightarrow Y$ of $g$ along the acylic cofibration
$Z\overset{\sim}{\hookrightarrow}RZ$ and hence a factorization of the following form.
\[\xymatrix{
g^{\ast}M\ar[rr]\ar@{->>}[dd]_{g^{\ast}m}\ar[dr] & & M\ar@{->>}[dd]^m\\
 & (g\sprime)^{\ast} M\ar[ur]\ar@{->>}[dd] & \\
 Z\ar[rr]|(.52)\hole_(.4)g\ar@{^(->}[dr]_{\sim} & & Y \\
  & RZ\ar[ur]_{g\sprime} & 
}\]
All three faces of the diagram are pullback squares, and by assumption $\kappa$-compact objects in $\mathbb{M}$ are closed under 
fiber products. Hence, in order to show that the object $g^{\ast}M\in\mathbb{M}$ is $\kappa$-compact, it suffices to show that the 
object $(g\sprime)^{\ast}M\in\mathbb{M}$ is $\kappa$-compact.

As $RZ$ is $\kappa$-compact in $\mathbb{M}$, it also is $\kappa$-compact in the underlying quasi-category
$\mathrm{Ho}_{\infty}(\mathbb{M})$ by Proposition~\ref{lemmacomparesmallobjects}, and hence so is the (homotopy-)pullback
$(g\sprime)^{\ast}M$ by our assumption on the morphism $f$ that we started with. 

By Proposition~\ref{lemmacomparesmallobjects} and \cite[Proposition 2.3.(iii)]{duggersmallpres} we find a cofibrant $\kappa$-compact 
object $X$ together with a weak equivalence $e\colon X\rightarrow (g\sprime)^{\ast}M$. The composition
$(g\sprime)^{\ast}m\circ e\colon X\rightarrow RZ$ is a map between $\kappa$-compact objects in $\mathbb{M}$, and so we find a 
factorization $X\overset{\sim}{\hookrightarrow} RX\twoheadrightarrow RZ$ such that $RX$ is $\kappa$-compact as well again by 
\cite[Proposition 2.3.(iii)]{duggersmallpres}. We obtain a weak equivalence $RX\rightarrow (g\sprime)^{\ast}M$ between the 
respective fibrations over $RZ$ as a lift to the resulting square. 

Since $\mathbb{M}$ has a theory of minimal fibrations, there is an acyclic cofibration
$j\colon N\overset{\sim}{\hookrightarrow}RX$ such that the restriction $n\colon N\twoheadrightarrow RZ$ of
$RX\twoheadrightarrow RZ$ is a minimal fibration. Since $j$ is an acyclic cofibration between fibrations, it has a 
retraction, and so $N$ is still $\kappa$-compact in $\mathbb{M}$. But the composition of weak equivalences 
$N\simeq (g\sprime)^{\ast}M$ over $RZ$ is a weak equivalence between minimal fibrations and hence is an isomorphism. Thus,
$(g\sprime)^{\ast}M$ is $\kappa$-compact in $\mathbb{M}$.
\end{proof}

\begin{corollary}\label{corcmpctezreedy}
Let $\mathbb{P}$ be an Eilenberg-Zilber category in the sense of \cite[Section 2.1]{cisinski} and
$\mathbb{M}=\mathrm{sPsh}(\mathbb{P})_{\mathrm{inj}}$ be the category of simplicial presheaves on $\mathbb{P}$ equipped 
with the injective model structure. Then for all sufficiently large regular cardinals $\kappa$, a morphism
$f\in\mathrm{Ho}_{\infty}(\mathbb{M})$ is relatively $\kappa$-compact if and only if there is a $\kappa$-small fibration 
$p\in\mathbb{M}$ between fibrant objects such that $p\simeq f$ in $\mathrm{Ho}_{\infty}(\mathbb{M})$.
\end{corollary}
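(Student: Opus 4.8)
The plan is to read off the corollary as a direct instance of Proposition~\ref{propcmpct1}, applied to the model category $\mathbb{M}=\mathrm{sPsh}(\mathbb{P})_{\mathrm{inj}}$. That proposition already yields precisely the desired biconditional -- a morphism $f\in\mathrm{Ho}_{\infty}(\mathbb{M})$ is relative $\kappa$-compact, for all sufficiently large regular $\kappa$, if and only if it is presented by a $\kappa$-small fibration between fibrant objects -- as soon as one knows that $\mathbb{M}$ is a cofibrantly generated model structure on a presheaf category $\widehat{\mathbb{C}}$ whose cofibrations are all monomorphisms and which admits a theory of minimal fibrations in the sense of Definition~\ref{defminimalfibs}. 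So the entire argument reduces to verifying these hypotheses for $\mathrm{sPsh}(\mathbb{P})_{\mathrm{inj}}$; no further computation is needed.

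First I would dispatch the structural hypotheses, which are routine. Since $\mathbb{P}$ is an ordinary category, the isomorphism $\mathrm{sPsh}(\mathbb{P})\cong\widehat{\mathbb{P}\times\Delta^{op}}$ recorded in Section~\ref{secsmallvscompact} exhibits the underlying category as a presheaf category $\widehat{\mathbb{C}}$; the injective model structure on simplicial presheaves over a small category is combinatorial, in particular cofibrantly generated; and, by definition of the injective model structure, its cofibrations are exactly the monomorphisms. The only real content is the theory of minimal fibrations, and here is the one point at which the Eilenberg-Zilber Reedy hypothesis on $\mathbb{P}$ enters. For an EZ-Reedy category $\mathbb{P}$ in the sense of \cite[Section 2.1]{cisinski}, the injective model structure on $\mathrm{sPsh}(\mathbb{P})$ is exactly the setting of Cisinski's minimal fibration machinery: \cite[Theorem 2.14]{cisinski} produces, for every fibration $p\colon X\twoheadrightarrow Y$, an acyclic cofibration $M\hookrightarrow X$ whose composite to $Y$ is a minimal fibration, which is Definition~\ref{defminimalfibs}.(2); \cite[Proposition 2.16]{cisinski} shows that a fibrewise homotopy equivalence between minimal fibrations is an isomorphism, which is Definition~\ref{defminimalfibs}.(1); and the class of minimal fibrations in question is pullback stable. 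Hence $\mathrm{sPsh}(\mathbb{P})_{\mathrm{inj}}$ has a theory of minimal fibrations.

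With every hypothesis of Proposition~\ref{propcmpct1} in place, applying it to $\mathbb{M}=\mathrm{sPsh}(\mathbb{P})_{\mathrm{inj}}$ gives the corollary. The main -- indeed only -- obstacle is the appeal to \cite{cisinski}: one must make sure that Cisinski's notion of minimal fibration literally matches Definition~\ref{defminimalfibs} and that the model structure he treats is exactly the injective one on simplicial presheaves over an EZ-Reedy category. It is precisely this that confines the present corollary to EZ-Reedy $\mathbb{P}$; the passage to simplicial presheaves over an arbitrary small simplicial category $\mathbf{C}$ is deferred to the later sections, where it is obtained by transporting this result along the presentations constructed in Section~\ref{sectransferskeleton}.
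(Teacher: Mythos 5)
Your proposal is correct and follows the paper's own route: the paper likewise reduces the corollary to Proposition~\ref{propcmpct1} by observing that $\mathrm{sPsh}(\mathbb{P})_{\mathrm{inj}}$ carries a theory of minimal fibrations via \cite[2.13--2.16]{cisinski}, the other hypotheses being routine. Your more explicit verification of the structural hypotheses (presheaf category via $\mathbb{P}\times\Delta^{op}$, cofibrations $=$ monomorphisms, cofibrant generation) is a welcome amplification of what the paper leaves implicit.
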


\begin{proof}
The model category $\mathbb{M}$ supports a theory of minimal fibrations as shown in \cite[2.13-2.16]{cisinski}, and $\kappa$-small 
objects in (simplicial) presheaf categories are closed under fiber products for infinite $\kappa$. Thus the statement follows from 
Proposition~\ref{propcmpct1} for $\mathbb{M}=\mathrm{sPsh}(\mathbb{P})_{\mathrm{inj}}$.
\end{proof}

\subsection{The projective case}

We now make use of the observations in Section~\ref{sectransferskeleton} to generalize Corollary~\ref{corcmpctezreedy} 
to the category of simplicial presheaves over arbitrary small simplicial categories.

Therefore, we want to make use of \cite[Proposition 5.10, Corollary 6.5]{duggerunivhtytheories} which shows that any zig-zag
$\mathbb{M}_1\leftarrow\dots\rightarrow\mathbb{M}_n$ of Quillen-equivalences between combinatorial model categories can be reduced to 
a single Quillen-equivalence whenever either $\mathbb{M}_1$ or $\mathbb{M}_n$ is a ``standard presentation'' of the form
$\mathcal{L}_T\mathrm{sPsh}(\mathbb{C})_{\mathrm{proj}}$ for some small category $\mathbb{C}$ and some set of maps 
$T\subset\mathrm{sPsh}(\mathbb{C})$. In our case however, we we wish to start with model categories of the form
$\mathcal{L}_T\mathbf{sPsh}(\mathbf{C})_{\mathrm{proj}}$ for general small \emph{simplicial} categories $\mathbf{C}$ instead.
Therefore, we show a simplicially enriched version of
\cite[Proposition 5.10]{duggerunivhtytheories} first.

\begin{proposition}\label{lemmaduggergen}
Let $\mathbf{C}$ be a small simplicial category, and $\mathbb{M}$, $\mathbb{N}$ be simplicial model categories together 
with a simplicial Quillen-equivalence $(L,R)\colon\mathbb{N}\xrightarrow{\sim}\mathbb{M}$. Let $T$ be a class of arrows 
in $\mathrm{sPsh}(\mathbf{C})$ such that its left Bousfield localization exists, and let
$(F,G)\colon\mathcal{L}_T\mathbf{sPsh(\mathbf{C})}_{\mathrm{proj}}\rightarrow\mathbb{M}$ be a simplicial Quillen pair. 
Then there is a simplicial Quillen pair
$(F\sprime,G\sprime)\colon\mathcal{L}_T\mathbf{sPsh(\mathbf{C})}_{\mathrm{proj}}\rightarrow\mathbb{N}$ such that the 
functors $L\circ F\sprime$ and $F$ are Quillen-homotopic in the sense of \cite[Definition 5.9]{duggerunivhtytheories}. 
In other words, simplicial Quillen pairs with domain $\mathcal{L}_T\mathbf{sPsh}(\mathbf{C})_{\mathrm{proj}}$ can be 
lifted up to homotopy along Quillen-equivalences.
\end{proposition}
\begin{proof}
Let $\lambda$ and $\rho$ denote cofibrant and fibrant replacements respectively and $\mathbb{L}=\lambda^{\ast}$,
$\mathbb{R}=\rho^{\ast}$ denote their associated left and right derivations of functors. Let the composition
\[\lambda\mathbb{R}(R)Fy\colon\mathbf{C}\rightarrow\mathbb{N}\]
be denoted by $p$. Note that the left and right derivation $\mathbb{L}$ and $\mathbb{R}$ of simplicial functors may be 
chosen to be simplicial again by \cite[Corollary 13.2.4]{riehlcthty}, thus $p$ is a simplicial functor and we can 
consider the simplicially enriched left Kan extension
\[\xymatrix{
\mathbf{C}\ar[d]_y\ar[r]^{p} & \mathbb{N}. \\
\mathbf{sPsh}(\mathbf{C})\ar@{-->}[ur]_{\mathrm{Lan}_yp} & 
}\]
We claim that $F\sprime:=\mathrm{Lan}_yp$ is the left Quillen 
functor we are looking for. First, let us construct the Quillen homotopies connecting $L\circ F\sprime$ and $F$.

Recall that, as explained for instance in \cite[4.31]{kellybook}, for every presheaf
$X\in\mathbf{sPsh}(\mathbf{C})$ the object $\mathrm{Lan}_yp(X)$ is the colimit of $p$ weighted by $X$, i.e.\
\[F\sprime=\blank\star p.\]
The left Quillen functor $L\colon\mathbb{N}\rightarrow\mathbb{M}$ is a left adjoint and hence preserves weighted
colimits, thus we have that $L\circ F\sprime\cong\blank\star Lp$. Furthermore, by \cite[Theorem 3.3]{gambinosmplwghts} 
the weighted colimit functor
\[\blank\star\blank\colon\mathbf{sPsh}(\mathbf{C})_{\mathrm{proj}}\times[\mathbf{C},\mathbb{M}]_{\mathrm{inj}}\rightarrow\mathbb{M}\]
is a left Quillen bifunctor. In particular, for cofibrant presheaves $X\in\mathbf{sPsh}(\mathbf{C})_{\mathrm{proj}}$ the 
$X$-weighted colimit
\[X\star\blank\colon[\mathbf{C},\mathbb{M}]_{\mathrm{inj}}\rightarrow\mathbb{M}\]
is a left Quillen functor. But both $Fy$ and $Lp\cong \mathbb{L}(L)\mathbb{R}(R)Fy$ are cofibrant objects in
$[\mathbf{C},\mathbb{M}]_{\mathrm{inj}}$: the former because representables are projectively cofibrant 
and $F$ preserves cofibrant objects, and the latter because $L$ preserves cofibrant objects. Thus, if
$\rho_{Fy}\colon Fy\rightarrow r(Fy)$ denotes an injective fibrant replacement of $Fy$, the counit
$\varepsilon_{r(Fy)}\colon Lp\Rightarrow r(Fy)$ of the Quillen-equivalence $(L,R)$ induces a span of natural weak 
equivalences between the cofibrant objects $Lp$, $r(Fy)$ and $Fy$. Thus, for cofibrant presheaves
$X\in(\mathrm{sPsh}(\mathbf{C}))_{\mathrm{proj}}$, we 
obtain a zig-zag of natural weak equivalences between $X\star Lp$ and $X\star Fy$. But
$\blank\star Fy$ is just $F$ (by \cite[4.51]{kellybook}), so we have constructed a span of Quillen homotopies between
$L\circ F\sprime$ and $F$.

Second, the fact that $F\sprime\colon \mathbf{sPsh}(\mathbf{C})_{\mathrm{proj}}\rightarrow\mathbb{N}$ is a left Quillen 
functor with right adjoint $G\sprime(N)=\mathbb{N}(p\blank,N)$ was basically already shown above (following for 
instance, as it were, from \cite[Theorem 3.3.]{gambinosmplwghts}).

We are left to show that, third, the Quillen pair
\[(F\sprime,G\sprime)\colon\mathbf{sPsh}(\mathbf{C})_{\mathrm{proj}}\rightarrow\mathbb{N}\]
descends to the localization at $T$ whenever $F$ does so. That is, we have to show that every arrow $f\in T$ is mapped
to a weak equivalence by $F\sprime$ in $\mathbb{N}$ assuming every such arrow is mapped to a weak equivalence by $F$ in $\mathbb{M}$. 
Without loss of generality all arrows $f\in T$ have cofibrant domain and codomain. Then, given $f\in T$, the arrow $F(f)$ is a weak 
equivalence in $\mathbb{M}$, and so is
$LF\sprime(f)\in\mathbb{M}$ since $F$ and $LF\sprime$ are Quillen-homotopic. Thus, $\mathbb{R}(R)(LF\sprime(f))$ is a 
weak equivalence in $\mathbb{N}$, but this arrow is weakly equivalent to $F\sprime(f)$ since $(L,R)$ is a Quillen 
equivalence. It follows that $F\sprime(f)$ is a weak equivalence in $\mathbb{N}$ itself.
\end{proof}

\begin{theorem}\label{thmcmptduggerpres}
Let $\mathbf{C}$ be a small simplicial category, $T\subset\mathrm{sPsh}(\mathbf{C})$ be a set of maps and
$\mathbb{M}=\mathcal{L}_T(\mathbf{sPsh}(\mathbf{C}))_{\mathrm{proj}}$. 
Then for all sufficiently large regular cardinals $\kappa$, a morphism $f\in\mathrm{Ho}_{\infty}(\mathbb{M})$ is relatively $\kappa$-compact if 
and only if there is a $\kappa$-small fibration $p\in\mathbb{M}$ between fibrant objects such that $p\simeq f$ in
$\mathrm{Ho}_{\infty}(\mathbb{M})$.
\end{theorem}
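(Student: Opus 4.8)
The plan is to reduce \cref{thmcmptduggerpres} to \cref{corcmpctezreedy} via the zig-zag of Quillen pairs supplied by \cref{proptransferskeletonproj}, transporting $\kappa$-small fibrations along the functors using \cref{lemmaadjpressmall}, \cref{lemmaduggergen}, and \cref{lemmapreservecompact}. First I would invoke \cref{proptransferskeletonproj} to obtain a direct relative poset $(\mathbb{P},V)$ and the chain of Quillen pairs connecting $\mathrm{sPsh}(\mathbb{P})_{\mathrm{proj}}$ through $(\mathrm{sPsh}(F_\Delta\mathbb{P}))_{\mathrm{proj}}$ and $\mathrm{sPsh}(\mathcal{L}_\Delta(\mathbb{P},V))_{\mathrm{proj}}$ to $(\mathrm{sPsh}(\mathbf{C}))_{\mathrm{proj}}$, in which $(j^\ast,j_\ast)$ is a homotopy localization and all other pairs are Quillen equivalences. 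The set $T$ of arrows in $\mathrm{sPsh}(\mathbf{C})$ pulls back along this zig-zag to a set $T'$ of arrows in $\mathrm{sPsh}(\mathbb{P})$ (using \cref{lemmaduggergen} to lift the localizing Quillen pairs along the Quillen equivalences up to homotopy), so that $\mathbb{M}=\mathcal{L}_T(\mathrm{sPsh}(\mathbf{C}))_{\mathrm{proj}}$ is connected by a zig-zag of Quillen equivalences (and one homotopy localization) to $\mathcal{L}_{T'}(\mathrm{sPsh}(\mathbb{P}))_{\mathrm{proj}}$; since $\mathbb{P}$ is a direct poset, it is in particular an Eilenberg--Zilber Reedy category, so $\mathrm{sPsh}(\mathbb{P})_{\mathrm{inj}}$ — and hence, by \cref{lemmacmpt0}, its localization $\mathcal{L}_{T'}(\mathrm{sPsh}(\mathbb{P}))_{\mathrm{inj}}$ — has a theory of minimal fibrations, so \cref{propcmpct1} applies to it. Because the projective and injective localizations at $T'$ share the same underlying category and equivalent underlying quasi-categories, the statement for $\mathcal{L}_{T'}(\mathrm{sPsh}(\mathbb{P}))_{\mathrm{proj}}$ follows.

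It then remains to propagate the equivalence ``relative $\kappa$-compact $\Leftrightarrow$ presented by a $\kappa$-small fibration between fibrant objects'' along the zig-zag. For the direction producing a $\kappa$-small fibration from a relative $\kappa$-compact map: relative $\kappa$-compactness is an intrinsic property of the underlying quasi-category, hence invariant under the derived equivalences; at the end of the chain I obtain a $\kappa$-small fibration $p'$ between fibrant objects in $\mathcal{L}_{T'}(\mathrm{sPsh}(\mathbb{P}))$ presenting the given $f$, and I push it forward through the left Quillen functors of the chain. The key point is that each of the relevant left adjoints takes representables to representables (the functors $\varphi_!$, $j_!$, and the $(f_i)_!$ are, up to the identifications of \cref{sectransferskeleton}, left Kan extensions along functors of the underlying categories), so \cref{lemmaadjpressmall} shows the corresponding right adjoints preserve $\kappa$-small maps for $\kappa$ large enough — but here I want the left adjoints to preserve $\kappa$-small fibrations, which I get by noting that left Kan extension along a functor of small categories sends $\kappa$-small presheaves to $\kappa$-small presheaves (for $\kappa$ above the cardinalities of the categories involved) and sends fibrations to fibrations since these are left Quillen. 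Applying this finitely often and replacing by a fibrant model (which stays $\kappa$-small, using \cref{lemmacomparesmallobjects} and \cref{lemmacompactissmall}) yields a $\kappa$-small fibration between fibrant objects in $\mathbb{M}$ presenting $f$.

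For the converse — that a $\kappa$-small fibration between fibrant objects in $\mathbb{M}$ presents a relative $\kappa$-compact map — I would argue directly as in the converse direction of \cref{propcmpct1}: given such a $p\colon X\twoheadrightarrow Y$ and a map $g\colon A\to Y$ with $\kappa$-compact domain in $\mathrm{Ho}_\infty(\mathbb{M})$, use \cref{lemmacomparesmallobjects} to present $A$ by a bifibrant $\kappa$-small presheaf, form the strict pullback (a homotopy pullback since $p$ is a fibration and the objects are fibrant), observe it is $\kappa$-small by hypothesis, and conclude it is $\kappa$-compact by \cref{lemmacompactissmall} together with \cref{lemmacomparesmallobjects}. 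I expect the main obstacle to be the bookkeeping around the functor $j_!$: since $(j^\ast,j_\ast)$ is only a homotopy \emph{localization} rather than an equivalence, one must be careful that pushing a $\kappa$-small fibration through $j_!$ (rather than through $j^\ast$) still lands in $\mathcal{L}_{T'}$-local objects and correctly presents the map in $\mathrm{Ho}_\infty(\mathbb{M})$ — this is exactly where \cref{lemmaduggergen}, which lets one lift the localizing Quillen pair along the equivalences while keeping track of representables, does the essential work, and where the ``slightly weaker'' caveat mentioned in the introduction (fibrancy hypotheses on domain and codomain) becomes necessary.
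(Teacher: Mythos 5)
Your overall reduction---replace $\mathbf{C}$ by a direct poset $\mathbb{P}$ via Proposition~\ref{proptransferskeletonproj}, transfer $T$, apply Proposition~\ref{propcmpct1} (via Lemma~\ref{lemmacmpt0}) over $\mathbb{P}$, and then transport the resulting $\kappa$-small fibration back to $\mathrm{sPsh}(\mathbf{C})$---is the paper's strategy. But the transport step as you describe it does not work, and this is the crux of the proof. You propose to push the $\kappa$-small fibration forward ``through the left Quillen functors of the chain,'' asserting that left Kan extension ``sends fibrations to fibrations since these are left Quillen.'' Left Quillen functors preserve cofibrations, not fibrations; $f_!$ for the projective structures does not preserve fibrations in general. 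Moreover, the chain is a genuine zig-zag (the DK-equivalences $f_1,\dots,f_n$ alternate in direction), so there is no single direction in which ``the left adjoints'' all point from $\mathrm{sPsh}(\mathbb{P})$ to $\mathrm{sPsh}(\mathbf{C})$; you cannot compose them.

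The paper resolves both problems at once, and this is precisely the role of Lemma~\ref{lemmaduggergen}, which you cite but use for the wrong purpose (transferring $T$, which only needs the derived equivalences). First, Lemma~\ref{lemmapseudosheaves}---which your argument never invokes---upgrades the homotopy localization $(j_!,j^{\ast})$ to a Quillen \emph{equivalence} after localizing its source at $y[V]$, so the entire chain becomes a zig-zag of Quillen equivalences between $\mathcal{L}_U\mathrm{sPsh}(\mathbb{P})_{\mathrm{inj}}$ and $\mathcal{L}_T\mathrm{sPsh}(\mathbf{C})_{\mathrm{proj}}$ with $U=\varphi_![y[V]]\cup\bar T$. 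Lemma~\ref{lemmaduggergen} then collapses this zig-zag into a \emph{single} simplicial Quillen equivalence $(F,G)$ whose left adjoint $F$ has domain $\mathcal{L}_T\mathrm{sPsh}(\mathbf{C})_{\mathrm{proj}}$. The $\kappa$-small fibration between fibrant objects produced over $\mathbb{P}$ is then carried back by the \emph{right} adjoint $G$, which preserves fibrations and fibrant objects for free, and preserves $\kappa$-small maps by Lemma~\ref{lemmaadjpressmall} (its hypothesis that $F$ preserves $\kappa$-small objects being supplied by Lemma~\ref{lemmapreservecompact}.(2) together with inaccessibility of $\kappa$). Your converse direction (a $\kappa$-small fibration between fibrant objects is relative $\kappa$-compact) is argued exactly as in the paper and is fine.
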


\begin{proof}
Let $\mathbf{C}$ be a small simplicial category and $T\subset\mathrm{sPsh}(\mathbf{C})$ be a set of maps. By 
Proposition~\ref{proptransferskeletonproj} we obtain a relative poset $(\mathbb{P},V)$ and a zig-zag of Quillen-equivalences of the 
form
\[\xymatrix{
\mathcal{L}_{y[V]}\mathbf{sPsh}(\mathbb{P})_{\mathrm{inj}}\ar@<-.5ex>[r]_{\mathrm{id}} & \mathcal{L}_{y[V]}\mathbf{sPsh}(\mathbb{P})_{\mathrm{proj}}\ar@<-.5ex>[r]_(.45){\varphi^{\ast}}\ar@<-.5ex>[l]_{\mathrm{id}} & \mathcal{L}_{y[F_{\Delta}(V)]}\mathbf{sPsh}(F_{\Delta}\mathbb{P})_{\mathrm{proj}}\ar@<-.5ex>[l]_(.55){\varphi_!}
}\]
\[\xymatrix{
\mathcal{L}_{y[F_{\Delta}(V)]}\mathbf{sPsh}(F_{\Delta}\mathbb{P})_{\mathrm{proj}}\ar@<.5ex>[r]^(.55){j_!} & \mathbf{sPsh}(\mathcal{L}_{\Delta}(\mathbb{P},V))_{\mathrm{proj}}\ar@<.5ex>[l]^(.45){j^{\ast}}\ar@<.5ex>[r]^(.7){(f_n)_{!}} 
& \dots\ar@<.5ex>[l]^(.3){f_n^{\ast}}\ar@<-.5ex>[r]_(.3){f_1^{\ast}} & \mathbf{sPsh}(\mathbf{C})_{\mathrm{proj}}
\ar@<-.5ex>[l]_(.7){(f_1)_!}.
}\]
This yields a zig-zag of Quillen-equivalences
\[\xymatrix{
\mathcal{L}_{(y[V]\cup\bar{T})}\mathbf{sPsh}(\mathbb{P})_{\mathrm{inj}}\ar@<.5ex>[r] & \dots\ar@<.5ex>[r]\ar@<.5ex>[l] & \mathcal{L}_T\mathbf{sPsh}(\mathbf{C})_{\mathrm{proj}}\ar@<.5ex>[l]
}\]
where $\bar{T}\subset\mathrm{sPsh}(\mathbb{P})$ is obtained from $T\subset\mathrm{sPsh}(\mathbf{C})$ by transferring $T$ along 
the finitely many Quillen-equivalences successively. We denote the union $y[V]\cup\bar{T}\subset\mathrm{sPsh}(\mathbb{P})$
short-handedly by $U$.

By Proposition~\ref{lemmaduggergen} this chain of Quillen-equivalences induces a single Quillen-equivalence
\begin{equation}\label{eququequivdugger}
\xymatrix{
\mathcal{L}_T\mathbf{sPsh}(\mathbf{C})_{\mathrm{proj}}\ar@<.5ex>[r]^F & \mathcal{L}_{U}\mathbf{sPsh}(\mathbb{P})_{\mathrm{inj}}.\ar@<.5ex>[l]^G
}
\end{equation} 
The left Bousfield localization $\mathcal{L}_U\mathbf{sPsh}(\mathbb{P})_{\mathrm{inj}}$ has a theory of minimal fibrations by
Lemma~\ref{lemmacmpt0}. Thus, let $\kappa\gg|\mathbf{C}|,|\mathbb{P}|$ be regular and, first, large enough such that
Corollary~\ref{corcmpctezreedy} applies to $\mathbb{P}$, second, large enough such that Proposition~\ref{propcmpct1} applies 
to $\mathcal{L}_U\mathbf{sPsh}(\mathbb{P})_{\mathrm{inj}}$, and third, large enough such that both $F$ and $G$ preserves $\kappa$-compact 
objects (via Lemma~\ref{lemmapreservecompact} or its ordinary categorical analogon as right adjoints between locally presentable categories 
are accessible again).

Now, let $f\in\mathrm{Ho}_{\infty}(\mathbb{M})$ be relatively $\kappa$-compact. Since the pair 
(\ref{eququequivdugger}) is a Quillen-equivalence, the quasi-category $\mathrm{Ho}_{\infty}(\mathbb{M})$ is equivalent 
to the underlying quasi-category of $\mathcal{L}_U\mathbf{sPsh}(\mathbb{P})_{\mathrm{inj}}$. Then, by
Proposition~\ref{propcmpct1}, there is a $\kappa$-small fibration $p\colon X\twoheadrightarrow Y$ between fibrant 
objects in $\mathcal{L}_U\mathbf{sPsh}(\mathbb{P})_{\mathrm{inj}}$ presenting $f$ in
$\mathrm{Ho}_{\infty}(\mathbb{M})$. By assumption both adjoints preserve $\kappa$-compact objects, so by Corollary~\ref{coradjpressmall} the 
right Quillen functor $G$ preserves $\kappa$-small maps. Thus, $Gp\colon GX\twoheadrightarrow GY$ is a $\kappa$-small fibration between fibrant 
objects presenting $f$ in $\mathrm{Ho}_{\infty}(\mathbb{M})$.

In particular, the converse of Proposition~\ref{lemmacomparesmallobjects}.1 holds in $\mathbb{M}$ for every such $\kappa$, and so the other 
direction follows directly from Proposition~\ref{propcmpct1}.1.
\end{proof}

\begin{corollary}\label{corcmptduggerpres}
Let $\mathbb{M}$ be a combinatorial model category. 
\begin{enumerate}
\item For all sufficiently large regular cardinals $\kappa$, an object $C$ in $\mathrm{Ho}_{\infty}(\mathbb{M})$ 
is $\kappa$-compact if and only if there is a $\kappa$-compact object $D\in\mathbb{M}$ such that $C\simeq D$ in
$\mathrm{Ho}_{\infty}(\mathbb{M})$. 
\item Let $\mathcal{L}_T(\mathrm{sPsh}(\mathbb{C}))_{\mathrm{proj}}$ be the presentation of $\mathbb{M}$ from Dugger's representation 
theorem for combinatorial model categories in \cite{duggersmallpres}. Then for all sufficiently large regular cardinals $\kappa$, 
a morphism $f\in\mathrm{Ho}_{\infty}(\mathbb{M})$ is relatively $\kappa$-compact if and only if there is a $\kappa$-small fibration
$p\in\mathcal{L}_T(\mathrm{sPsh}(\mathbb{C}))_{\mathrm{proj}}$ between fibrant objects such that $p\simeq f$ 
in $\mathrm{Ho}_{\infty}(\mathbb{M})$.
\end{enumerate}
\end{corollary}
\begin{proof}
For Part 1, one direction is exactly Proposition~\ref{lemmacomparesmallobjects}.1. For the other direction, let 
\[\xymatrix{\mathcal{L}_T\mathrm{sPsh}(\mathbb{C})_{\mathrm{proj}}\ar@<.5ex>[r]^(.7)L & \mathbb{M}\ar@<.5ex>[l]^(.3)R}\]
be the Quillen-equivalence from Dugger's representation theorem. Then for $\kappa$ large enough and every $\kappa$-compact object 
$C\in\mathrm{Ho}_{\infty}(\mathbb{M})$, we obtain a $\kappa$-compact object $D\in\mathcal{L}_T\mathrm{sPsh}(\mathbb{C})_{\mathrm{proj}}$ from 
Theorem~\ref{thmcmptduggerpres} which presents $C$. As the left adjoint $L$ preserves $\kappa$-compact objects, we find a $\kappa$-compact 
fibrant replacement of $L(D)$ in $\mathbb{M}$ which presents $C$.

Part 2 is just a special case of Theorem~\ref{thmcmptduggerpres}.
\end{proof}

\begin{remark}\label{remgencombprob}
The reason why in Corollary~\ref{corcmptduggerpres}.2 we don't obtain the comparison result for $\mathbb{M}$ itself is 
that there is no obvious reason why the Quillen-equivalence
\[\xymatrix{\mathcal{L}_T\mathrm{sPsh}(\mathbb{C})_{\mathrm{proj}}\ar@<.5ex>[r]^(.7)L & \mathbb{M}\ar@<.5ex>[l]^(.3)R}\]
given by Dugger's presentation theorem should preserve relatively $\kappa$-compact maps. While the right adjoint certainly 
does preserve such maps, the left adjoint does not seem to exhibit any properties with that respect.
\end{remark}

\subsection{The injective case}

In this section we prove an analogous result for the injective model structure and get rid of the condition on fibrancy 
of the bases whenever the localization is left exact. We will make use of Shulman's results \cite{shulmanuniverses} 
in two ways. Therefore, applied to the special case relevant for this paper, recall the forgetful functor
\begin{align}\label{defforgetfun}
U\colon\mathrm{sPsh}(\mathbf{C})\rightarrow\mathbf{S}^{\mathrm{Ob}(\mathbf{C})}
\end{align}
with right adjoint
\[G\colon \mathbf{S}^{\mathrm{Ob}(\mathbf{C})}\rightarrow\mathrm{sPsh}(\mathbf{C}).\]
The functor $G$ takes objects $W\in\mathbf{S}^{\mathrm{Ob}(\mathbf{C})}$ to the presheaf evaluating an object $C\in\mathbf{C}$ at
\[G(W)(C):=\prod_{C\sprime\in\mathbf{C}}W(C\sprime)^{\mathbf{C}(C\sprime,C)}\in\mathbf{S}.\]
The adjoint pair $(U,G)$ gives rise to a comonad on $\mathrm{sPsh}(\mathbf{C})$ with standard resolution
\[\mathsf{C}_{\bullet}(G,UG,U\blank)\colon\mathrm{sPsh}(\mathbf{C})\rightarrow \mathrm{sPsh}(\mathbf{C})^{\Delta}.\]
The associated cobar construction
$\mathsf{C}(G,UG,U\blank)\colon\mathrm{sPsh}(\mathbf{C})\rightarrow\mathrm{sPsh}(\mathbf{C})$ is then defined as the 
pointwise totalization
\[\mathrm{Tot}(\mathsf{C}_{\bullet}(G,UG,U\blank))=\int_{[n]\in\Delta}(\mathsf{C}_n(G,UG,U\blank))^{\Delta^n}.\] 
A crucial observation of Shulman is that the cobar construction takes (acyclic) projective fibrations to 
pointwise weakly equivalent (acyclic) injective fibrations. More precisely, the natural coaugmentation
$\eta\colon\mathrm{id}\Rightarrow\mathsf{C}(G,UG,U\blank)$ is a pointwise weak equivalence, and the arrow
$\mathsf{C}(G,UG,U p)$ is an (acyclic) injective fibration whenever $p$ is an (acyclic) projective fibration. All this is 
covered in \cite[Section 8]{shulmanuniverses} in much greater generality.
It is not hard to see that the cobar construction preserves $\kappa$-smallness (for $\kappa$ large enough).

\begin{lemma}\label{lemmacobarsmall}
Let $\mathbf{C}$ be a small simplicial category and $f\colon X\rightarrow Y$ be a $\kappa$-small map in
$\mathrm{sPsh}(\mathbf{C})$ for $\kappa$ large enough. Then $\mathsf{C}(G,UG,U f)$ is $\kappa$-small, too.
\end{lemma}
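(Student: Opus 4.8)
The plan is to unwind the definition of $\kappa$-smallness for a map and check it is stable under the three operations out of which the cobar construction is assembled: the right adjoint $G$, finite (indeed $\kappa$-small) limits, and the totalization (an end over $\Delta$). First I would recall that $f\colon X\rightarrow Y$ being $\kappa$-small means every pullback of $f$ along a map $Z\rightarrow Y$ with $\kappa$-small domain has $\kappa$-small domain; equivalently, since $\mathrm{sPsh}(\mathbf{C})$ is generated by the objects $yC\otimes\Delta^n$, it suffices to test against the generators, so I want to show that for each generator mapping to $\mathsf{C}(G,UG,U Y)$ the corresponding fibre of $\mathsf{C}(G,UG,U f)$ is $\kappa$-small. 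Because the cobar construction is built pointwise and limits commute with limits, pullbacks of $\mathsf{C}(G,UG,U f)$ are computed degreewise and then totalized; so the whole question reduces to showing that each cosimplicial level $\mathsf{C}_n(G,UG,U f) = G(UG)^n U f$ is a $\kappa$-small map, and that $\kappa$-smallness is preserved under the $\Delta$-indexed end (a $\kappa$-small limit, since $\Delta$ has only countably many objects and $\kappa$ is large) and under the cotensors $(\blank)^{\Delta^n}$ against finite simplicial sets.

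The key reduction is therefore: $\kappa$-smallness of maps is preserved by $G$, by $U$, and by $U$-then-$G$ composites, and by $\kappa$-small limits. For $U\colon\mathrm{sPsh}(\mathbf{C})\rightarrow\mathbf{S}^{\mathrm{Ob}(\mathbf{C})}$ this is transparent: $U$ is evaluation at objects, so it manifestly sends $\kappa$-small presheaves to $\kappa$-small ones and preserves pullbacks, hence preserves $\kappa$-small maps. For $G$ the relevant point is the explicit formula $G(W)(C)=\prod_{C'\in\mathbf{C}}W(C')^{[C',C]_{\mathbf{C}}}$: for $\kappa\gg|\mathbf{C}|$ this is a $\kappa$-small product of $\kappa$-small sets (using regularity of $\kappa$ for the product over objects and inaccessibility for the exponentials $W(C')^{[C',C]_{\mathbf{C}}}$, exactly as in the proof of Lemma~\ref{lemmaadjpressmall}), so $G$ preserves $\kappa$-small objects; since $G$ is a right adjoint it preserves pullbacks, hence preserves $\kappa$-small maps. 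The composite $UG$ (and any finite iterate $(UG)^n$) then also preserves $\kappa$-small maps, so each $\mathsf{C}_n(G,UG,U f)$ is $\kappa$-small.

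Finally I would assemble these: the coaugmented cosimplicial object $\mathsf{C}_\bullet(G,UG,U f)$ is a diagram of $\kappa$-small maps, its totalization is the end $\int_{[n]\in\Delta}(\mathsf{C}_n(G,UG,U f))^{\Delta^n}$, and this is a limit indexed by the twisted-arrow category of $\Delta$ (a small, in fact countable, category) of finite cotensors of $\kappa$-small maps; cotensoring a $\kappa$-small map against the finite simplicial set $\Delta^n$ keeps it $\kappa$-small, and a $\kappa$-small limit of $\kappa$-small maps is $\kappa$-small by regularity of $\kappa$. Here I should be a little careful that "$\kappa$-small limit of $\kappa$-small maps is $\kappa$-small" is applied correctly — it is cleanest to argue objectwise, computing a pullback of $\mathsf{C}(G,UG,U f)$ along a generator $yC\otimes\Delta^m$, pushing the pullback inside the end (limits commute), and bounding the resulting iterated limit of $\kappa$-small sets; this objectwise bookkeeping, rather than any deep idea, is the main obstacle, and it goes through precisely because $\kappa\gg|\mathbf{C}|$ is regular and inaccessible and $\Delta$ is countable.
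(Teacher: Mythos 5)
Your proposal is correct and follows the same route as the paper: reduce to showing that $U$, $G$, and the totalization each preserve $\kappa$-smallness, which the paper dismisses as ``elementary checks with no surprises'' and which you actually carry out (correctly invoking the mechanism of Lemma~\ref{lemmaadjpressmall} for $G$ and inaccessibility of $\kappa$ for the products, exponentials, and the countable end over $\Delta$). No gaps; you have simply supplied the details the paper omits.
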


\begin{proof}
The forgetful functor (\ref{defforgetfun}) preserves $\kappa$-smallness of both objects and maps by Remark~\ref{remsmallptwise}. 
Hence, by Lemma~\ref{lemmaadjpressmall}, the right adjoint $G$ preserves $\kappa$-smallness of maps, too. It follows that for every
$\kappa$-small map $f\colon X\rightarrow Y$ in $\mathrm{sPsh(\mathbf{C}})$, the map $\mathsf{C}_{\bullet}(G,UG,Uf)$ of cosimplicial 
objects is levelwise $\kappa$-small. Thus we are only left to show that totalization preserves $\kappa$-smallness of cosimplicial 
objects. But, being a subobject of a countable product of $\kappa$-small simplicial sets, the statement follows.
\end{proof}

Therefore, we directly obtain an analogue of Theorem~\ref{thmcmptduggerpres} for the injective model structure as 
follows.

\begin{proposition}\label{propcmptinj}
Let $\mathbf{C}$ be a small simplicial category, $T\subset\mathrm{sPsh}(\mathbf{C})$ be a set of maps and
$\mathbb{M}=\mathcal{L}_T\mathbf{sPsh}(\mathbf{C})_{\mathrm{inj}}$. 
Then for all sufficiently large regular cardinals $\kappa$, a morphism $f\in\mathrm{Ho}_{\infty}(\mathbb{M})$ is 
relatively $\kappa$-compact if and only if there is a $\kappa$-small fibration
$p\in\mathbb{M}$ between fibrant objects such that $p\simeq f$ in $\mathrm{Ho}_{\infty}(\mathbb{M})$.
\end{proposition}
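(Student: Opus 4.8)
The plan is to reduce \cref{propcmptinj} to the projective case already established in \cref{thmcmptduggerpres} by transferring the statement across the cobar construction. Since the model categories $\mathcal{L}_T(\mathrm{sPsh}(\mathbf{C}))_{\mathrm{proj}}$ and $\mathcal{L}_T(\mathrm{sPsh}(\mathbf{C}))_{\mathrm{inj}}$ share the same underlying category and the identity functor is a Quillen equivalence between them (the injective weak equivalences coincide with the projective ones, by definition of these model structures on simplicial presheaves and the left exactness being irrelevant for this comparison), they have equivalent underlying quasi-categories $\mathrm{Ho}_{\infty}(\mathbb{M})$. Hence the notion of ``relative $\kappa$-compact morphism in $\mathrm{Ho}_{\infty}(\mathbb{M})$'' is literally the same object on both sides, and only the presentation by a strict fibration changes.

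First I would fix $\kappa$ sufficiently large: inaccessible, with $\kappa\gg|\mathbf{C}|$, large enough for \cref{thmcmptduggerpres} to apply to $\mathcal{L}_T(\mathrm{sPsh}(\mathbf{C}))_{\mathrm{proj}}$, and large enough for \cref{lemmacobarsmall} to apply. For the ``only if'' direction, suppose $f\in\mathrm{Ho}_{\infty}(\mathbb{M})$ is relative $\kappa$-compact. By \cref{thmcmptduggerpres} applied to the projective model structure, there is a $\kappa$-small projective fibration $p\colon X\twoheadrightarrow Y$ between projectively fibrant objects with $p\simeq f$ in $\mathrm{Ho}_{\infty}(\mathbb{M})$. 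Now apply the cobar construction: $\mathsf{C}(G,UG,U p)$ is an injective fibration by Shulman's observation recalled above, it is $\kappa$-small by \cref{lemmacobarsmall}, and the coaugmentation $\eta\colon\mathrm{id}\Rightarrow\mathsf{C}(G,UG,U\blank)$ is a pointwise weak equivalence, so the naturality square of $\eta$ at $p$ exhibits $\mathsf{C}(G,UG,U p)$ and $p$ as weakly equivalent arrows; in particular $\mathsf{C}(G,UG,U p)\simeq f$ in $\mathrm{Ho}_{\infty}(\mathbb{M})$. Since $p$ was a fibration between projectively (hence injectively) fibrant objects and $\eta$ is a pointwise equivalence, the codomain $\mathsf{C}(G,UG,U Y)$ is injectively fibrant, as is the domain; this gives the desired $\kappa$-small injective fibration between fibrant objects. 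For the ``if'' direction, a $\kappa$-small injective fibration $p\in\mathbb{M}$ between fibrant objects is in particular a projective fibration between projectively fibrant objects (injective fibrations are projective fibrations), so \cref{thmcmptduggerpres} immediately gives that $p$ is relative $\kappa$-compact in $\mathrm{Ho}_{\infty}(\mathbb{M})$; alternatively one can rerun the argument of \cref{propcmpct1} verbatim, using that $\mathcal{L}_T(\mathrm{sPsh}(\mathbf{C}))_{\mathrm{inj}}$ has a theory of minimal fibrations (by \cref{corcmpctezreedy} via the transfer to an Eilenberg--Zilber Reedy poset and \cref{lemmacmpt0}).

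The step I expect to require the most care is verifying that the cobar-transferred fibration $\mathsf{C}(G,UG,U p)$ genuinely has fibrant domain and codomain \emph{as an object of the localized model category} $\mathcal{L}_T(\mathrm{sPsh}(\mathbf{C}))_{\mathrm{inj}}$, not merely of the unlocalized $\mathrm{sPsh}(\mathbf{C})_{\mathrm{inj}}$: one needs that $\mathsf{C}(G,UG,U Y)$ is $T$-local, which follows because $Y$ is $T$-local (being fibrant in the localized projective structure) and $\eta_Y$ is a weak equivalence, so $\mathsf{C}(G,UG,U Y)$ is weakly equivalent to a $T$-local object and hence $T$-local itself. A minor additional point is to confirm that Shulman's cobar argument, which is stated in \cite[Section 8]{shulmanuniverses} for the unlocalized structures, passes to the left Bousfield localization; but since the injective fibrations between fibrant objects of $\mathcal{L}_T$ are exactly the injective fibrations between $T$-local objects (by \cite[Proposition 3.3.16]{hirschhorn03}), and we have just checked $T$-locality is preserved, this is immediate.
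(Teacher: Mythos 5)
Your proposal matches the paper's proof essentially step for step: apply Theorem~\ref{thmcmptduggerpres} to the projective structure, push the resulting fibration through the cobar construction, invoke Lemma~\ref{lemmacobarsmall} for $\kappa$-smallness and the pointwise weak equivalence $\eta$ for $T$-locality of the cobar objects, and note for the converse that injective fibrations are projective fibrations. One small slip: ``projectively (hence injectively) fibrant'' has the implication backwards --- injective fibrancy is the \emph{stronger} condition --- but the injective fibrancy of $\mathsf{C}(G,UG,UX)$ and $\mathsf{C}(G,UG,UY)$ is exactly what Shulman's cobar result supplies, so the argument stands.
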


\begin{proof}
Let $f$ be relatively $\kappa$-compact in $\mathrm{Ho}_{\infty}(\mathbb{M})$. By Theorem~\ref{thmcmptduggerpres} there is 
a $\kappa$-small fibration $p\colon X\twoheadrightarrow Y$ between fibrant objects in
$\mathcal{L}_T\mathrm{sPsh}(\mathbf{C})_{\mathrm{proj}}$ such that $p\simeq f$ in the underlying quasi-category. 
Hence, by Lemma~\ref{lemmacobarsmall} and \cite[Section 8]{shulmanuniverses} the map
\[\mathsf{C}(G,UG,U p)\colon \mathsf{C}(G,UG,U X)\twoheadrightarrow \mathsf{C}(G,UG,U Y)\]
is a $\kappa$-small injective fibration between injectively fibrant objects. But the coaugmentations $\eta_X$ and 
$\eta_Y$ are pointwise weak equivalences in $\mathcal{L}_T\mathbf{sPsh}(\mathbf{C})_{\mathrm{inj}}$, and so the objects
$\mathsf{C}(G,UG,U X)$ and $\mathsf{C}(G,UG,U Y)$ are $T$-local and thus fibrant in $\mathbb{M}$. Hence, the map
$\mathsf{C}(G,UG,U p)$ is a fibration between fibrant objects in $\mathbb{M}$.

The other direction follows immediately from Theorem~\ref{thmcmptduggerpres} since every injective fibration is a 
projective fibration. 
\end{proof}

\begin{remark}\label{remfibext}
Whenever $\mathbb{M}$ satisfies the fibration extension property for relatively $\kappa$-compact maps
(\cite[Definition 2.2.1]{thesis}), we can get rid of the fibrancy condition on the bases of maps in Proposition~\ref{propcmptinj}. 
That is, because in that case every relatively $\kappa$-compact fibration is weakly equivalent to a relatively $\kappa$-compact fibration 
with fibrant base. For example, every left exact left Bousfield localization of $\mathrm{sPsh}(\mathbf{C})_{\mathrm{inj}}$ is a type 
theoretic model topos by \cite[Corollary 8.31, Theorem 10.5]{shulmanuniverses} and hence has univalent universes (with fibrant base) 
for $\kappa$-small fibrations for every regular $\kappa$ large enough (\cite[Theorem 5.22]{shulmanuniverses}). It follows that the class 
$S_{\kappa}$ of $\kappa$-small maps does satisfy the fibration extension property in every left exact left Bousfield localization of
$\mathrm{sPsh}(\mathbf{C})_{\mathrm{inj}}$ (\cite[Lemma 2.2.2]{thesis}).
\end{remark}

\begin{theorem}\label{thmcmptinj}
Let $\mathbf{C}$ be a small simplicial category, and let $T\subset\mathrm{sPsh}(\mathbf{C})$ be a set of maps such that the 
localization $\mathbb{M}=\mathcal{L}_T\mathbf{sPsh}(\mathbf{C})_{\mathrm{inj}}$ is left exact. 
Then for all sufficiently large regular cardinals $\kappa$, a morphism $f\in\mathrm{Ho}_{\infty}(\mathbb{M})$ is 
relatively $\kappa$-compact if and only if there is a $\kappa$-small fibration
$p\in\mathbb{M}$ such that $p\simeq f$ in $\mathrm{Ho}_{\infty}(\mathbb{M})$.
\end{theorem}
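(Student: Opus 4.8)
The plan is to deduce Theorem~\ref{thmcmptinj} from Proposition~\ref{propcmptinj} by removing the fibrancy hypothesis on the bases of the $\kappa$-small fibrations, using the fibration extension property discussed in the paragraph immediately preceding the statement. Since Proposition~\ref{propcmptinj} already gives the equivalence ``$f$ relative $\kappa$-compact iff $f$ is presented by a $\kappa$-small fibration \emph{between fibrant objects}'', and the class of $\kappa$-small fibrations between fibrant objects is a subclass of all $\kappa$-small fibrations, one direction (from a $\kappa$-small fibration between fibrant objects to relative $\kappa$-compactness) is immediate. So the real content is: given an arbitrary $\kappa$-small fibration $p\in\mathbb{M}$ presenting $f$, show $f$ is relative $\kappa$-compact; and conversely, that a relative $\kappa$-compact $f$ can be presented by a $\kappa$-small fibration (with no constraint on the base), which is weaker than the corresponding statement in Proposition~\ref{propcmptinj} and hence free.

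First I would recall that because $\mathbb{M}=\mathcal{L}_T(\mathrm{sPsh}(\mathbf{C}))_{\mathrm{inj}}$ is left exact, it is a type theoretic model topos by \cite[Corollary 8.31, Theorem 10.5]{shulmanuniverses}, so it carries univalent universes classifying $\kappa$-small fibrations and in particular the class $S_\kappa$ of $\kappa$-small fibrations satisfies the fibration extension property \cite[Definition 2.2.1]{thesis}. Concretely, this means: for any $\kappa$-small fibration $q\colon E\twoheadrightarrow B$ and any acyclic cofibration $B\overset{\sim}{\hookrightarrow} B'$, there is a $\kappa$-small fibration $E'\twoheadrightarrow B'$ together with a pullback square exhibiting $q$ as the restriction, with $E\overset{\sim}{\hookrightarrow}E'$ an acyclic cofibration. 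Applying this with $B\overset{\sim}{\hookrightarrow}B'$ a fibrant replacement of the base, any $\kappa$-small fibration becomes weakly equivalent in the arrow category to a $\kappa$-small fibration whose base — and hence, since $E'\to B'$ is a fibration with fibrant target, whose total space — is fibrant. Thus the class of maps in $\mathrm{Ho}_\infty(\mathbb{M})$ presented by $\kappa$-small fibrations coincides with the class presented by $\kappa$-small fibrations between fibrant objects.

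Given this, the argument is short: let $f\in\mathrm{Ho}_\infty(\mathbb{M})$. If $f$ is relative $\kappa$-compact, Proposition~\ref{propcmptinj} produces a $\kappa$-small fibration between fibrant objects presenting $f$; this is in particular a $\kappa$-small fibration, proving the ``only if'' direction. Conversely, if $f$ is presented by a $\kappa$-small fibration $p\in\mathbb{M}$, the fibration extension property (applied to a fibrant replacement of the base, then factoring the resulting map to fibrant replacement of the total space) replaces $p$ by a $\kappa$-small fibration $p'$ between fibrant objects with $p'\simeq p\simeq f$ in $\mathrm{Ho}_\infty(\mathbb{M})$; then Proposition~\ref{propcmptinj} gives that $f$ is relative $\kappa$-compact. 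One must only make sure that the finitely many enlargements of $\kappa$ needed — those of Proposition~\ref{propcmptinj} plus possibly one more so that the universe classifying $S_\kappa$ exists and the fibration extension property holds, cf.\ Notation~\ref{notationlarge} — are taken simultaneously by passing to a common sharply-larger regular inaccessible cardinal.

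The main obstacle, such as it is, is bookkeeping rather than mathematics: verifying that the fibration extension property genuinely lets one trade an arbitrary $\kappa$-small fibration for a $\kappa$-small fibration between \emph{fibrant} objects in a way that is an equivalence on the level of $\mathrm{Ho}_\infty(\mathbb{M})$ — one needs both the base and the total space fibrant, and total-space fibrancy follows from base fibrancy only after one has a genuine fibration over the fibrant base, which is exactly what the extension property provides. The other point requiring a line of care is that ``left exact localization'' is precisely the hypothesis that makes \cite{shulmanuniverses} applicable and hence makes $S_\kappa$ satisfy fibration extension; without left exactness one only has Proposition~\ref{propcmptinj}. I expect no genuine difficulty beyond assembling these cited inputs.
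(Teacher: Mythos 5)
Your proposal is correct and matches the paper's own argument: the paper proves Theorem~\ref{thmcmptinj} exactly by combining Proposition~\ref{propcmptinj} with the fibration extension property for $S_\kappa$, which holds because a left exact localization of $\mathrm{sPsh}(\mathbf{C})_{\mathrm{inj}}$ is a type theoretic model topos by \cite[Corollary 8.31, Theorem 10.5]{shulmanuniverses}. Your additional observation that total-space fibrancy is automatic once the base is fibrant is the right way to close the bookkeeping.
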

\begin{proof}
Immediate by Proposition~\ref{propcmptinj} and Remark~\ref{remfibext}.
\end{proof}

\section{Object classifiers and weak Tarski universes}\label{secuniverses}

We conclude with a comment on the relevance of these results for the $(\infty,1)$-categorical semantics of Homotopy Type Theory.
Let $\mathcal{M}$ be an $\infty$-topos and let $\mathbf{C}$ be a small simplicial category with a set $T$ of 
arrows in $\mathrm{sPsh}(\mathbf{C})$ such that the localization
$\mathbf{sPsh}(\mathbf{C})_{\mathrm{inj}}\rightarrow\mathcal{L}_T\mathbf{sPsh}(\mathbf{C})_{\mathrm{inj}}$ is left exact and presents 
$\mathcal{M}$. Then $\mathbb{M}:=\mathcal{L}_T\mathbf{sPsh}(\mathbf{C})_{\mathrm{inj}}$ is a type theoretic model category as shown in 
\cite[Section 7]{gepnerkock}. Shulman recently has shown in \cite{shulmanuniverses} (among other results) that this presentation
$\mathbb{M}$ in fact can be enhanced to a type theoretic model \emph{topos}, and hence exhibits an infinite sequence of categorical models of 
univalent strict Tarski universes. Furthermore, as for example stated in the Introduction of \cite{gepnerkock}, it is somewhat folklore to 
assume that these categorical models of universes are object classifiers in $\mathcal{M}$, and that more generally the object classifiers 
in $\mathcal{M}$ correspond to categorical models for univalent weak Tarski universes in $\mathbb{M}$.
Here, by a categorical model of a weak Tarski universe we understand a regular (or potentially more specific) cardinal $\kappa$ together with 
a fibration that is weakly universal for the class of $\kappa$-small fibrations. Weak universality of a fibration
$p\colon E\twoheadrightarrow B$ for a class $S$ of fibrations in turn means that $p$ is contained in $S$, that it is univalent, and that for 
all fibrations $q\colon X\twoheadrightarrow Y$ in $S$ there is a map $w\colon X\rightarrow B$ such that $q$ is the homotopy pullback of $p$ 
along $w$. Clearly, every univalent strictly universal fibration is a weakly universal fibration for the same class of maps whenever the 
model category $\mathbb{M}$ is right proper.

Since all fibrant objects in $\mathbb{M}=\mathcal{L}_T\mathbf{sPsh}(\mathbf{C})_{\mathrm{inj}}$ are cofibrant and $\mathbb{M}$ is 
right proper indeed, it is easy to see that a univalent weakly universal fibration for a pullback stable class $S$ of fibrations in
$\mathbb{M}$ yields a classifying object for the class $\mathrm{Ho}_{\infty}[S]$ of 
morphisms in $\mathcal{M}$ and that, vice versa, every classifying object for a pullback stable class $T$ of morphisms 
in $\mathcal{M}$ yields a univalent weakly universal fibration for the class
\[\bar{T}:=\{f\in\mathcal{F}_{\mathbb{M}}\mid f\in\mathrm{Ho}_{\infty}(\mathbb{M})\text{ is in $T$}\}\]
of maps in $\mathbb{M}$. Here, the higher categorical notion of univalence that characterizes object classifiers corresponds to the 
model categorical -- and hence to the syntactical -- notion of univalence by \cite[Proposition 7.12]{gepnerkock}.

There is one such pair of classes $(S,T)$ of maps in each case which is relevant for the construction of strict Tarski 
universes in the internal language of $\mathbb{M}$ on the one hand, and the definition of object classifiers in
$\mathcal{M}$ on the other. That is, given a sufficiently large regular cardinal $\kappa$, the class 
$S_{\kappa}$ of $\kappa$-small fibrations in $\mathrm{sPsh}(\mathbf{C})$ and the class $T_{\kappa}$ of relatively
$\kappa$-compact maps in $\mathcal{M}$.
In the former case, the common constructions of univalent universal fibrations
$\pi_{\kappa}\colon\tilde{U}_{\kappa}\twoheadrightarrow U_{\kappa}$ use various functorial closure properties of 
$S_{\kappa}$ and the fact that an infinite sequence of inaccessible cardinals yields a cumulative hierarchy of universal 
fibrations which are closed under all standard type formers in this way.
In the latter case, \cite[Theorem 6.1.6.8]{luriehtt} characterizes $\infty$-toposes in terms of classifying objects
$p_{\kappa}\colon\tilde{V}_{\kappa}\rightarrow V_{\kappa}$ for $T_{\kappa}$ for all sufficiently large regular cardinals $\kappa$.

While the classifying map $p_{\kappa}\colon\tilde{V}_{\kappa}\rightarrow V_{\kappa}$
lifts to a fibration in $\mathbb{M}$ which is weakly universal for $\bar{T}_{\kappa}$, and $\pi_{\kappa}$ descends to 
a classifying object for the class $\mathrm{Ho}_{\infty}[S_{\kappa}]$, it is a priori unclear whether
$S_{\kappa}\subseteq\bar{T}_{\kappa}$ and $T_{\kappa}\subseteq\mathrm{Ho}_{\infty}[S_{\kappa}]$ hold. In other words, without a 
comparison of relative compactness notions as considered in Section~\ref{secsmallvscompact},
it is not clear whether the categorical construction of (either weak or strict) universal $\kappa$-small fibrations in
$\mathbb{M}$ -- which models Tarski universes in the associated type theory -- also models universes in the underlying 
quasi-category. Theorem~\ref{thmcmptinj} however does show $T_{\kappa}=\mathrm{Ho}_{\infty}[S_{\kappa}]$. In other words, we obtain 
the following corollary.

\begin{corollary}\label{coruniverses}
Let $\mathbb{M}=\mathcal{L}_T\mathbf{sPsh}(\mathbf{C})_{\mathrm{inj}}$ be a model topos, and let $\kappa$ be a sufficiently large 
regular cardinal. Then a relatively $\kappa$-compact map $p\in\mathrm{Ho}_{\infty}(\mathbb{M})$ is a classifying map for all 
relatively $\kappa$-compact maps in $\mathrm{Ho}_{\infty}(\mathbb{M})$ if and only if there is a univalent $\kappa$-small fibration
$\pi\in\mathbb{M}$ which is weakly universal for all $\kappa$-small fibrations in $\mathbb{M}$ such that $p\simeq\pi$ in
$\mathrm{Ho}_{\infty}(\mathbb{M})$.  \qed
\end{corollary}

\begin{remark}\label{remlargecardinals}
Let us finish with a note on the closure under standard type formers of a given Tarski universe, with regards to the existence of 
``sufficiently large'' regular cardinals that has been a standing assumption along the way (Notation~\ref{notationlarge}). Given a model 
topos of the form $\mathbb{M}=\mathcal{L}_T\mathbf{sPsh}(\mathbf{C})_{\mathrm{inj}}$, for a regular cardinal $\kappa$ to be sufficiently 
large means to be contained in the class $\text{Shl}(\lambda)$ of regular cardinals sharply larger than a specified cardinal $\lambda$ 
associated to the small simplicial category $\mathbf{C}$ -- or to the $\infty$-topos $\mathcal{M}$ that is.\footnote{See \cite{shulmansharppost} for an 
illuminating discussion on directly related issues.} For a universal fibration $\pi_{\kappa}\in\mathbb{M}$ as in Corollary~\ref{coruniverses} 
to be closed under the standard type formers in an appropriate sense (\cite[Section 6]{shulmanuniverses}), the cardinal $\kappa$ has to be 
assumed to be strongly inaccessible.
Thus, if we start with an $\infty$-topos $\mathcal{M}$ and wish to show that its type 
theoretic presentation $\mathbb{M}:=\mathcal{L}_T\mathbf{sPsh}(\mathbf{C})_{\mathrm{inj}}$ exhibits a universal fibration for $\kappa$-small 
fibrations that is closed under all standard type formers, we need an object classifier $V_{\kappa}$ in $\mathcal{M}$ classifying relatively
$\kappa$-compact maps for a strongly inaccessible $\kappa$ \emph{in} $\text{Shl}(\lambda)$. The same holds if we want to show that a given 
cumulative hierarchy of universal fibrations associated to strong inaccessibles $\kappa_i$ in $\mathbb{M}$ yields a corresponding hierarchy 
of object classifiers in $\mathcal{M}$. Thus, the translation of the 
categorical structure together with a universe (or even a cumulative infinite hierarchy of such) 
between Homotopy Type Theory and higher topos theory requires the existence of inaccessibles within any such given class of sharply 
larger cardinals. Fortunately, for strongly inaccessible cardinals $\kappa$ we have $\lambda\ll\kappa$ if 
and only if $\lambda<\kappa$. We hence do not need to make any large cardinal assumptions beyond the existence of the 
inaccessibles themselves.
\end{remark}


\bibliographystyle{amsplain}
\bibliography{BSBib}
\Address
\end{document}